\numberwithin{equation}{section}
\newtheorem{theorem}{Theorem}[section]
\newtheorem{claim}[theorem]{Claim}
\newtheorem{proposition}[theorem]{Proposition}
\newtheorem{lemma}[theorem]{Lemma}
\newtheorem{corollary}[theorem]{Corollary}
\newtheorem*{theorem*}{Theorem}
\newtheorem*{claim*}{Claim}
\newtheorem*{proposition*}{Proposition}
\newtheorem*{lemma*}{Lemma}
\newtheorem*{corollary*}{Corollary}
\newtheorem{theoremA}{Theorem}
\theoremstyle{definition}
\newtheorem{definition}[theorem]{Definition}
\newtheorem{remark}[theorem]{Remark}
\newtheorem{example}[theorem]{Example}
\newtheorem*{definition*}{Definition}
\newtheorem*{observation*}{Observation}
\newtheorem*{remark*}{Remark}
\newtheorem*{example*}{Example}
\newtheorem*{question*}{Question}
\newtheorem*{exercise*}{Exercise}
\newtheorem*{fact*}{Fact}
\newtheorem*{notation*}{Notation}
\newcommand{\bbN}{\mathbb{N}}
\newcommand{\bbQ}{\mathbb{Q}}
\newcommand{\bbR}{\mathbb{R}}
\newcommand{\bbZ}{\mathbb{Z}}
\newcommand{\bfc}{\mathbf{c}}
\newcommand{\bfs}{\mathbf{s}}
\newcommand{\bft}{\mathbf{t}}
\newcommand{\bfw}{\mathbf{w}}
\newcommand{\calA}{\mathcal{A}}
\newcommand{\calB}{\mathcal{B}}
\newcommand{\calF}{\mathcal{F}}
\newcommand{\calN}{\mathcal{N}}
\newcommand{\actson}{\curvearrowright}
\newcommand{\cechH}{\check{H}}
\newcommand{\ii}{^{-1}}
\newcommand{\gen}[1]{\left< #1 \right>}
\newcommand{\nnN}{\mathbb{N}_{\ge0}}
\newcommand{\geod}[2]{{\llbracket {#1} , {#2}\rrbracket}}
\newcommand{\adelta}{{\boldsymbol\delta}}
\DeclareMathOperator{\Aut}{Aut}
\DeclareMathOperator{\Cay}{Cay}
\DeclareMathOperator{\Out}{Out}
\DeclareMathOperator{\im}{Im}
\DeclareMathOperator{\Vol}{Vol}
\DeclareMathOperator{\rank}{rank}
\DeclareMathOperator{\Cx}{C}
\DeclareMathOperator{\supp}{supp}
\DeclareMathOperator{\df}{def}
\newcommand{\ie}{\textit{i.e.} }
\title{Finite index rigidity of hyperbolic groups}
\author{Nir Lazarovich\thanks{Supported by the Israel Science Foundation (grant no. 1562/19).}}
\date{}
\begin{document}
\spacing{1}
\maketitle

\begin{abstract}
    We prove that the topological complexity of a finite index subgroup of a hyperbolic group is linear in its index. 
    This follows from a more general result relating the size of the quotient of a free cocompact action of a hyperbolic group on a graph to the minimal number of cells in a simplicial classifying space for the group.
    As a corollary we prove that any two isomorphic finite-index subgroups of a non-elementary hyperbolic group have the same index.
\end{abstract}

\section{Introduction}

We say that a group $G$ is \emph{finite index rigid} (or \emph{satisfies the volume condition} \cite{bridson2010cofinitely}) if it does not contain isomorphic finite index subgroups of different indices. 
For example, the infinite cyclic group $\bbZ$ is not finite index rigid (\textit{e.g.} $2\bbZ\simeq 3\bbZ$). 
In contrast, the following families of groups are finite index rigid: finite groups; groups with non-zero (rational) Euler characteristic (\textit{e.g.} non-abelian free groups and hyperbolic surface groups); groups with some non-zero $\ell^2$-betti number; lattices in simple Lie groups  \cite{mostow1968quasi}; groups with deficiency $\ge 2$ \cite{reznikov1996volumes}; most 3-manifold groups \cite{wang1994covering,wang1999covering}; groups with infinitely many ends \cite{sykiotis2018complexity} (cf. also \S6 of \cite{lazarovich2021volume}); and groups which have a JSJ decomposition with a maximal hanging Fuchsian group \cite{sykiotis2018complexity}.

In all of the above cases, the proofs rely on the existence of a non-zero numerical invariant which depends linearly on the index of a finite index subgroup. 
In this paper we find such an invariant for hyperbolic groups:


\begin{theoremA}\label{thm: isomorphic finite index}
Every non-elementary hyperbolic group is finite index rigid.
\end{theoremA}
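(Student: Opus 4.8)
The plan is to attach to each hyperbolic group a strictly positive real invariant that is preserved by isomorphism and scales in a controlled way under passage to finite-index subgroups. For a hyperbolic group $G$, write $\operatorname{cplx}(G)$ for the least number of cells in a simplicial classifying space for $G$; this is finite (a Rips complex furnishes a cocompact model), and for a group with torsion one makes the standard substitution of the classifying space for proper actions, which does not affect the bookkeeping below. Two facts drive the proof. The elementary one: for finite-index $A \le B$, the degree-$[B:A]$ cover of a cell-minimal classifying space for $B$ is a classifying space for $A$, so
\[
  \operatorname{cplx}(A) \;\le\; [B:A]\cdot\operatorname{cplx}(B).
\]
The substantial one, supplied by the ``more general result'' of the abstract — which for a hyperbolic group relates the minimal size of the quotient of a free cocompact action on a graph to the minimal cell-count of a simplicial classifying space — is a matching \emph{lower} bound: for non-elementary hyperbolic $G$ there is a constant $c(G) > 0$ with $\operatorname{cplx}(H) \ge c(G)\cdot[G:H]$ for every finite-index $H \le G$.

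Granting these, set
\[
  \lambda(G) \;:=\; \inf\bigl\{\,\operatorname{cplx}(K)/[G:K] \;:\; K \le G,\ [G:K] < \infty \,\bigr\},
\]
so that $0 < c(G) \le \lambda(G) \le \operatorname{cplx}(G) < \infty$. The crucial observation is the scaling identity $\lambda(H) = [G:H]\cdot\lambda(G)$ for every finite-index $H \le G$. For ``$\ge$'': each finite-index $K' \le H$ is finite-index in $G$ with $[H:K'] = [G:K']/[G:H]$, so $\operatorname{cplx}(K')/[H:K'] = [G:H]\cdot\operatorname{cplx}(K')/[G:K'] \ge [G:H]\lambda(G)$. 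For ``$\le$'': given finite-index $K \le G$, the subgroup $K \cap H \le H$ has $[G:K\cap H] = [G:K]\cdot[K:K\cap H]$, and the cover bound gives $\operatorname{cplx}(K\cap H) \le [K:K\cap H]\cdot\operatorname{cplx}(K)$, whence $\operatorname{cplx}(K\cap H)/[H:K\cap H] \le [G:H]\cdot\operatorname{cplx}(K)/[G:K]$; since $K\cap H$ is finite-index in $H$, taking the infimum over all such $K$ gives $\lambda(H) \le [G:H]\lambda(G)$. Since $\lambda$ is visibly an isomorphism invariant, if $H_1 \cong H_2$ are finite-index subgroups of $G$ then $[G:H_1]\cdot\lambda(G) = \lambda(H_1) = \lambda(H_2) = [G:H_2]\cdot\lambda(G)$, and $\lambda(G) \neq 0$ forces $[G:H_1] = [G:H_2]$, which is \Cref{thm: isomorphic finite index}. (The hypothesis ``non-elementary'' enters only through $\lambda(G) \neq 0$; for $\bbZ$, say, $\lambda = 0$, consistently with $2\bbZ \cong 3\bbZ$.)

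The whole difficulty is thus concentrated in the lower bound $\operatorname{cplx}(H) \ge c(G)[G:H]$, and this is the step I expect to be the main obstacle. When some $\ell^2$-Betti number of $G$ is nonzero it is classical — $\ell^2$-Betti numbers are multiplicative, $\sum_i b_i^{(2)}(H) = [G:H]\sum_i b_i^{(2)}(G)$, and any cell-minimal classifying space for $H$ dominates this sum cell-by-cell — but that only recovers cases already in the introduction, and it is famously open whether every non-elementary hyperbolic group has a nonzero $\ell^2$-Betti number. A bound valid for \emph{all} non-elementary hyperbolic $G$ must therefore be genuinely geometric and must see $G$, not just $H$: the integer $[G:H]$ is invisible to the abstract group $H$, so one has to take a small classifying space for $H$ (equivalently, via its $1$-skeleton, a small free cocompact action of $H$ on a graph), compare it against the ambient hyperbolic group $G$, and use the hyperbolic geometry — presumably through a linear isoperimetric or combinatorial-Morse estimate on the Cayley graph, together with the non-degeneracy of $\partial G$ that separates non-elementary from elementary hyperbolic groups — to force the quotient to have on the order of $[G:H]$ cells. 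The reduction above then does the rest.
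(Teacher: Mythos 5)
Your argument mirrors the paper's proof almost exactly: the paper introduces the Reznikov-style invariant $\underline{C}_{2,m}(G)=\liminf_{[G:H]<\infty} C_{2,m}(H)/[G:H]$, notes that it is multiplicative and isomorphism-invariant, and invokes \cref{thm: index vs complexity} to see that it is a positive finite number, which immediately forces equal indices for isomorphic finite-index subgroups; your $\lambda(G)$ plays precisely this role and your verification of $\lambda(H)=[G:H]\lambda(G)$ is the same bookkeeping. The one step you pass over is that \cref{thm: index vs complexity} is proved only for \emph{one-ended} hyperbolic groups, so the paper first reduces the general non-elementary case to the one-ended case via Sykiotis \cite{sykiotis2018complexity}, who handles groups with infinitely many ends; your assertion that the ``more general result'' supplies the lower bound for every non-elementary $G$ therefore silently subsumes a reduction the paper carries out separately.
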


To put this theorem in context, note that Sela \cite{sela1997structure} proved that one-ended hyperbolic groups are co-Hopfian, i.e. they are not isomorphic to their proper subgroups.  
On the other hand, Stark-Woodhouse \cite{stark2018hyperbolic} found an example of a one-ended hyperbolic group with two isomorphic subgroups of different indices -- a finite-index subgroup which is isomorphic to an infinite-index subgroup.
The theorem above shows that it is impossible to find such an example in which both subgroups have finite index.


As an immediate corollary we obtain:
\begin{corollary}\label{cor: free by cyclic}
Let $\varphi \in \Out(F_d)$ be an atoroidal outer automorphism of the free group $F_d$, then $F_d \rtimes \gen{\varphi^m} \simeq F_d \rtimes \gen{\varphi^n}$ if and only if $|m|=|n|$.
\end{corollary}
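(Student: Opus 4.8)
Write $G_k := F_d \rtimes_{\varphi^k} \bbZ$ for the mapping torus, so that $F_d \rtimes \gen{\varphi^m} \simeq G_m$; we may assume $d\ge 2$ (for $d\le 1$ the group $F_d$ has no atoroidal automorphism, so there is nothing to prove). The plan is to realize the $G_k$ as finite-index subgroups of one another and then invoke \Cref{thm: isomorphic finite index}. The ``if'' direction is immediate: the map fixing $F_d$ and inverting the stable letter gives $F_d\rtimes_\psi\bbZ\cong F_d\rtimes_{\psi^{-1}}\bbZ$, so $G_m\cong G_{-m}$, and $|m|=|n|$ means $m=\pm n$.

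For the ``only if'' direction, by the above reduction we may assume $m,n\ge 1$ (if one of them is $0$ then $G_0=F_d\times\bbZ$ is not hyperbolic while $G_k$ for $k\ge1$ is, by the next paragraph, so $G_m\cong G_n$ forces $m=n=0$). First, each $G_k$ with $k\ge 1$ is non-elementary hyperbolic: atoroidality passes to powers, since a periodic nontrivial conjugacy class for $\varphi^k$ would be one for $\varphi$, so by Brinkmann's hyperbolization theorem the mapping torus $G_k=F_d\rtimes_{\varphi^k}\bbZ$ is word-hyperbolic, and it contains $F_d$ with $d\ge2$, hence is non-elementary. Second, for $k\ge1$ and with $t$ the stable letter of $G_m$ (so $twt\ii=\varphi^m(w)$ for $w\in F_d$), the subgroup $\gen{F_d,t^k}\le G_m$ has index $k$ (it maps onto $k\bbZ$ under $G_m\epi G_m/F_d\cong\bbZ$, with kernel $F_d$) and is isomorphic to $F_d\rtimes_{\varphi^{mk}}\bbZ=G_{mk}$, since conjugation by $t^k$ realizes $\varphi^{mk}$. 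In particular $G_{mn}$ embeds in $G_m$ with index $n$, and symmetrically in $G_n$ with index $m$.

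Now suppose $G_m\cong G_n$. Composing an embedding $G_{mn}\into G_n$ of index $m$ with an isomorphism $G_n\xrightarrow{\ \sim\ }G_m$ yields a finite-index subgroup $H\le G_m$ with $H\cong G_{mn}$ and $[G_m:H]=m$; on the other hand $G_m$ already contains a copy of $G_{mn}$ of index $n$. Thus the non-elementary hyperbolic group $G_m$ contains two isomorphic finite-index subgroups, of indices $m$ and $n$, so \Cref{thm: isomorphic finite index} forces $m=n$, i.e. $|m|=|n|$. The only points requiring care are that atoroidality is inherited by powers (so that each $G_k$ is genuinely non-elementary hyperbolic, via Brinkmann and $d\ge2$) and the bookkeeping of indices in the chain of embeddings; all the substantive content sits in \Cref{thm: isomorphic finite index}.
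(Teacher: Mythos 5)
Your proof is correct and rests on the same ingredients as the paper's (Brinkmann's hyperbolization theorem and \cref{thm: isomorphic finite index}), but takes a small and unnecessary detour: the paper simply observes that $F_d\rtimes\gen{\varphi^m}$ and $F_d\rtimes\gen{\varphi^n}$ are already finite-index subgroups of the single hyperbolic group $G=F_d\rtimes\gen{\varphi}$, of indices $|m|$ and $|n|$, so \cref{thm: isomorphic finite index} applied once to $G$ gives $|m|=|n|$ immediately. There is no need to embed $G_{mn}$ into both $G_m$ and $G_n$ and transport a subgroup across the isomorphism $G_n\cong G_m$; that step is sound but does extra work to reach the same conclusion.
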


\begin{proof}
The group $G=F_d \rtimes \gen{\varphi}$ is hyperbolic by \cite{brinkmann2000hyperbolic}. For every $k$ the group $F_d \rtimes \gen{\varphi^k}$ is a subgroup of $G$ of index $|k|$. The corollary now follows by \cref{thm: isomorphic finite index}.
\end{proof}

As mentioned above, the common strategy for proving finite index rigidity is to associate some ``complexity'' to a group and to show that it is proportional to its index. 
To elucidate this, consider the case of a non-abelian free group $G=F_d$ and consider a finite index subgroup $A\le G$. We have 
\begin{equation}\label{rank and index} 
 1-\rank(A)=\chi(A) = [G:A]\chi(G)
\end{equation}
where $\chi(G)$ denotes the Euler characteristic of $G$.
On the left hand side of \cref{rank and index}, we have a number that depends only on the isomorphism type of $A$ (and can be thought of measuring the ``complexity'' of the free group $A$), while on the right we have a number that depends linearly on the index $[G:A]$ (and on $G$). 
It easily follows from \cref{rank and index} that if two finite index subgroups $A,B\le G$ are isomorphic then $[G:A]=[G:B]$.

The proof of \cref{thm: isomorphic finite index} relies on a similar relation between complexity and index (or more generally covolume) in a discrete hyperbolic setting.
For a compact simplicial complex $\bar X$, we denote by $\Vol(\bar X)$ the total number of cells of $\bar X$, and by $\Vol_i(\bar X)$ the total number of cells in its $i$-skeleton.
For $i\le m$ and a group $G$ define $\Cx_{i,m}(G)$ to be the minimal $\Vol_i(\bar L)$ where $\bar L$ runs over all $m$-dimensional simplicial complexes such that $\pi_1(\bar L)=G$ and whose universal cover is $(m-1)$-connected.
(If such a complex $\bar L$ does not exist then $\Cx_{i,m}(G) = \infty$.)
The value of $\Cx_{i,m}(G)$ can be thought of as measuring the topological complexity of $G$, similar to the Kneser complexity $k(M)$ measuring the minimal triangulation of a manifold $M$. 
For $m=1$ this invariant is related to the rank $\Cx_{1,1}(G)\asymp \rank(G)$ (see \S\ref{sec: notation} for the definition of the notation $\asymp$). For $i=m=2$, if $G$ is one-ended, $\Cx_{2,2}(G)\asymp T(G)$, where $T(G)$ is Delzant's T-invariant \cite{delzant1996decomposition} defined by $$T(G) = \min \left\{\;\; \sum_{r\in R} \max\{|r|-2,0\} \;\;:\;\; \text{for all presentations } G=\gen{S \;|\; R}\;\;\right\}.$$

Our main result is the following relation between covolume and complexity.
\begin{theoremA}\label{thm: volume vs complexity}
Let $X$ be a locally finite one-ended hyperbolic graph. Then, there exists $m\in \bbN$ so that if $G$ acts on $X$ freely and cocompactly then $$ \Vol(X/G) \; \asymp_X \; \Cx_{2,m}(G) .$$
\end{theoremA}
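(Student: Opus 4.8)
The plan is to prove the two inequalities hidden in $\asymp_X$ separately. The soft one, $\Cx_{2,m}(G)\lesssim_X\Vol(X/G)$, also pins down $m$: since $G$ acts cocompactly, $X/G$ is finite and $X$ has bounded valence, and since $X$ is one-ended hyperbolic there is $d$, depending only on the hyperbolicity constant of $X$, such that the Rips complex $P_d(X)$ is contractible; its dimension is at most the number of vertices of a $d$-ball of $X$, which depends only on $X$, and I take this number to be $m$. As $P_d(X)$ has the same vertex set as $X$, the action $G\actson X$ extends to a free cocompact action on $P_d(X)$, and $\bar L_0:=P_d(X)/G$ is a finite complex with $\pi_1=G$ whose universal cover $P_d(X)$ is contractible, hence $(m-1)$-connected. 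Every $i$-cell of $P_d(X)$ with $i\le 2$ is an $(i{+}1)$-subset of a common $d$-ball, so the number of cell orbits of dimension $\le 2$ is $\lesssim_X\Vol_0(X/G)\asymp_X\Vol(X/G)$; thus $\Cx_{2,m}(G)\le\Vol_2(\bar L_0)\lesssim_X\Vol(X/G)$. (If $G$ has torsion one uses instead the $m$-skeleton of a classifying complex built from this action; one-ended hyperbolic groups have type $F_\infty$, so it is finite.)

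For the reverse bound I must show $\Vol_2(\bar L)\gtrsim_X\Vol(X/G)$ for every finite $m$-dimensional $\bar L$ with $\pi_1(\bar L)=G$ and $(m-1)$-connected universal cover. I would first reduce this to a statement about presentations: a spanning tree of $\bar L^{(1)}$ together with the boundary words of the $2$-cells gives a presentation of $G=\pi_1(\bar L^{(2)})$ with at most $\Vol_1(\bar L)$ generators and at most $\Vol_2(\bar L)$ relators, all of length $3$; conversely, any finite presentation of $G$ can be thickened by finitely many cells in dimensions $3,\dots,m$ (possible because $G$ has type $F_\infty$) into a valid competitor with the same $2$-skeleton up to subdivision. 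Hence $\Cx_{2,m}(G)\asymp\mathrm{Pres}(G):=\min\{|S|+\sum_{r\in R}|r|:G=\langle S\mid R\rangle\}$, and it is enough to prove $\mathrm{Pres}(G)\gtrsim_X\Vol(X/G)$: a hyperbolic group sitting deep inside $\Isom(X)$ should admit no cheap presentation.

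This last inequality is the heart of the matter, and the main obstacle is real. Taking an optimal presentation and its Cayley $2$-complex $Y$ --- simply connected, with a free cocompact $G$-action, $\Vol_2(Y/G)\asymp\mathrm{Pres}(G)$ --- one has $Y^{(1)}=\Cay(G,S)$ quasi-isometric to $X$, but with constants that cannot be bounded in terms of $X$ alone; so a naive ``carry a fundamental domain of $X$ into $Y$'' map is only coarsely Lipschitz with a constant depending on $\bar L$, and the induced $\bbZ G$-chain maps see only homological invariants of $G$, which need not grow with the covolume. Instead I would use both geometric hypotheses on $X$ essentially: hyperbolicity gives $X$, hence $Y^{(1)}$, a linear isoperimetric inequality, so the loops of $X$ coming from a fundamental domain can be filled inside $Y$ by $G$-equivariant families of disks of controlled combinatorial area; and one-endedness of $X$ --- concretely, that large metric spheres $S(v_0,R)\subseteq X$ are $C(X)$-coarsely connected --- should prevent these disks from absorbing mass ``near infinity'', forcing their number, hence $\Vol_2(Y/G)$, to be at least a fixed multiple of $\Vol_0(X/G)$. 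Organizing this as an induction on scale should give $\Vol_0(X/G)\lesssim_X\Vol_2(Y/G)$ and close the argument; making the filling scale-independent (via hyperbolicity) and killing the distortion at infinity (via one-endedness) is the technical crux.
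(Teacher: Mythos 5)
Your proof of $\Cx_{2,m}(G)\ll_X\Vol(X/G)$ is close in spirit to the paper's, except that the paper takes $m=\dim(\partial X)+1$ and builds the competitor complex from the $(m+1)$-fold join $X*\dots*X$ of Bestvina--Mess rather than from the full Rips complex; that small, boundary-dimension value of $m$ is not an aesthetic choice but is exactly what the paper's hard direction needs, and your much larger $m$ (the cardinality of a $d$-ball) would break the cohomological argument below.

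The reverse inequality $\Vol(X/G)\ll_X\Cx_{2,m}(G)$ has a genuine gap, which you candidly flag. Your reduction to a presentation complexity is reasonable and your diagnosis of the obstruction (the quasi-isometry constants of $\Cay(G,S)\to X$ depend on $\bar L$, so $\Bbb ZG$-chain-level information alone is useless) is exactly right, but after that you only gesture at a strategy --- linear isoperimetric filling plus coarse connectivity of spheres, organized ``by induction on scale'' --- without an argument, and it is not clear this can work: the isoperimetric inequality bounds filling area from \emph{above} by length, which is the wrong direction for a lower bound on the number of $2$-cells, and ``mass escaping to infinity'' is not a phenomenon that one-endedness alone rules out. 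The paper does something quite different and more structured. It fixes Mineyev's $\Aut(X)$-equivariant homological bicombing $q$ on $X$, pulls it back along a $G$-equivariant map $\Phi:K^0\to X^0$ (with $K=L^{(2)}$) to a $G$-invariant \emph{weighted singular pattern} $(\calF,\bfw)$ on $K$ (\S\S4--5, a combinatorial device in the spirit of Delzant and Rips--Sela), and proves two estimates: (i) choosing $\Phi$ to minimize edge displacement, the total weight of the quotient pattern is $\ll_X\Vol(\bar K)$ (\S6, an ``accessibility'' argument in which pushing vertices across outermost tracks would otherwise decrease the displacement); and (ii) a $G$-equivariant continuous extension $\Phi:L\to Y$ of $\Phi$ to a Rips complex $Y\supseteq X$ must be \emph{uniformly} quasi-surjective, with a constant depending only on $X$ --- this is where $m=\dim(\partial X)+1$ enters, via Bestvina--Mess's computation $\check H^{m-1}(\partial Y)\ne 0$ and naturality of the connecting map in \v Cech cohomology (\S7). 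Uniform quasi-surjectivity plus the pointwise lower bound on bicombing coefficients near geodesics (Corollary~3.5) forces $\Vol(X/G)\ll_X\bfw(\bar\calF)$, and combining with (i) closes the argument. None of this machinery appears in your sketch, and I do not see how to replace it with your isoperimetric/sphere-connectivity idea.
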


In fact, it suffices to take $m=\dim(\partial X)+1$ where $\partial X$ is the Gromov boundary and $\dim$ is its topological covering dimension. In particular, if $\dim(\partial X)=1$ we get a similar inequality for Delzant's $T$ invariant.

In the smooth setting, \textit{i.e.} when $X$ is a non-positively curved Riemannian manifold,  relations between covolume of lattices and measurements of complexity were extensively studied \cite{bader2020homology,ballmann1985manifolds,belolipetsky2010counting,cooper1999volume,delzant1996decomposition,delzant2013complexity,gelander2011volume,gelander2004homotopy,gelander2019minimal,gelander2021bounds,gromov1982volume,thurston1979geometry,reznikov1996volumes}.
We note that in the discrete setting of \cref{thm: volume vs complexity}, the inequality $\Cx_{i,m}(G) \ll_X \Vol(X/G)$ is almost immediate from the contractibility of the Rips complex and the definition of $\Cx_{i,m}(G)$ (see \S\ref{sec: notation} for the definition of the notation $\ll$). Therefore, most of the effort is in proving the other inequality, namely $\Vol(X/G) \ll_X \Cx_{2,m}(G)$. Loosely, this inequality states that the more volume $X/G$ has the more complicated the group $G$ should be as an abstract group.
To prove this inequality we start with some compact complex $\bar L$ that realizes $\Cx_{2,m}(G)$ (for large enough $m$ that depends only on $X$). In \S\ref{sec: global stability}--\S\ref{sec: resolution}, drawing on ideas of Delzant \cite{delzant1995image} and Rips-Sela \cite{rips1995canonical}, we use Mineyev's rational bicombing \cite{mineyev2001straightening} on $X$ to obtain a singular weighted pattern $\bar \calF$ on the 2-skeleton of $\bar L$. In \S\ref{sec: accessibility}, we prove that the total weight of the pattern $\bar \calF$ is bounded above by $\Vol_2(\bar L)=\Cx_{2,m}(G)$. In \S\ref{sec: quasisurjectivity}, using cohomological methods of Bestvina-Mess \cite{bestvina1991boundary}, we show that a continuous $G$-equivariant map from the universal cover of $\bar L$ to $X$ must be quasi-surjective with a constant that depends only on $X$ (``uniform quasi-surjectivity'' \cref{prop: uniform quasi-onto}). In \S\ref{sec: main proof}, we use this to show that the total weight of the pattern $\bar \calF$ is bounded below by the volume $\Vol(X/G)$. Combining the two inequalities gives $$\Vol(X/G) \ll_X \text{ total weight of $\bar \calF$ }\ll_X \Cx_{2,m}(G).$$

As a corollary of \cref{thm: volume vs complexity} one gets the desired relation between index and complexity:

\begin{theoremA}\label{thm: index vs complexity}
Let $G$ be a one-ended hyperbolic group. Then, there exists $m\in \bbN$ such that if $H$ is a finite index subgroup of $G$ then $$ [G:H] \; \asymp_G \;  C_{2,m}(H).$$
\end{theoremA}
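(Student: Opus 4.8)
The plan is to deduce \cref{thm: index vs complexity} directly from \cref{thm: volume vs complexity} by applying the latter to a Cayley graph of $G$. Fix a finite generating set $S$ of $G$ and set $X=\Cay(G,S)$. Since $G$ is hyperbolic and one-ended and $S$ is finite, $X$ is a locally finite, one-ended, hyperbolic graph, so \cref{thm: volume vs complexity} applies and produces an $m\in\bbN$, together with the multiplicative constants implicit in $\asymp_X$, all depending only on the fixed data $(G,S)$ — and hence, after fixing the (arbitrary) choice of $S$, only on $G$.

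Now let $H\le G$ be a finite-index subgroup. Restricting the free cocompact $G$-action on $X$ to $H$ shows that $H$ acts on $X$ freely and cocompactly, so \cref{thm: volume vs complexity} (applied to the $H$-action on the same $X$) gives $\Vol(X/H)\asymp_X \Cx_{2,m}(H)$. On the other hand, $G$ acts simply transitively on the vertex set of $X=\Cay(G,S)$, so $X/H$ has exactly $[G:H]$ vertices and $[G:H]\cdot|S|$ oriented edges; hence $\Vol(X/H)=(1+|S|)\,[G:H]\asymp_G [G:H]$. Chaining the two estimates yields $[G:H]\asymp_G \Cx_{2,m}(H)$, as claimed.

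There is no real obstacle here beyond this bookkeeping; the only point worth checking is that $m$ and the implicit constants in $\asymp$ are independent of $H$, and this is automatic because $X$ is determined by $G$ alone, while $H$ enters only through its cocompact action on this one fixed graph. (One could equally take $X$ to be the $1$-skeleton of a Rips complex of $G$ with a suitable parameter; the argument is identical.)
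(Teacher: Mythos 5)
Your proof is correct and is essentially the same as the paper's: fix a finite generating set $S$, take $X=\Cay(G,S)$, restrict the action to $H$, and apply \cref{thm: volume vs complexity} together with the observation that $\Vol(X/H)\asymp_X[G:H]$. The additional bookkeeping you include (counting vertices and edges of $X/H$) is just an elaboration of the same step.
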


\begin{proof}[\cref{thm: volume vs complexity} implies \cref{thm: index vs complexity}]
Let $X=\Cay(G,S)$ be the Cayley graph of $G$ with respect to some finite generating set $S$. The result now follows by  \cref{thm: volume vs complexity} since a finite index subgroup $H$ acts freely and cocompactly on $X$ and $\Vol(X/H) \asymp_X [G:H]$.
\end{proof}

\begin{proof}[Theorem \ref{thm: index vs complexity} implies Theorem \ref{thm: isomorphic finite index}]
By Sykiotis \cite{sykiotis2018complexity} it suffices to prove the theorem for one-ended groups.
Let $G$ be a one-ended hyperbolic group.
Following Reznikov \cite{reznikov1996volumes}, we define $$\underline{C}_{2,m}(G) := \liminf_{ H\le G,\; [G:H]<\infty} \frac{C_{2,m}(H)}{[G:H]} $$ where the limit is taken over the lattice of finite index subgroups (note that we do not assume residual finiteness of $G$).
It is easy to see that $\underline{C}_{2,m}(G)$ is multiplicative, that is, for all $H\le G$ of finite index $$\underline{C}_{2,m}(H) = [G:H]\cdot \underline{C}_{2,m}(G).$$
It is also clear that $\underline{C}_{2,m}(G)$ is a group isomorphism invariant.
By Theorem \ref{thm: index vs complexity}, there exists $m\in \bbN$ and $0<\alpha,\beta\in \bbR$ that depend on $G$, such that for every finite index subgroup $H\le G$, $$\alpha\le \frac{C_{2,m}(H)}{[G:H]} \le \beta.$$ Taking the limit inferior we get $0< \underline{C}_{2,m}(G)<\infty.$
It follows that $G$ is finite index rigid: if $H_1,H_2 \le G$ are two isomorphic finite index subgroups of $G$ then $$[G:H_2] \cdot \underline{C}_{2,m}(G) = \underline{C}_{2,m}(H_2)= \underline{C}_{2,m}(H_1) = [G:H_1]\cdot\underline{C}_{2,m}(G)$$
and it follows that $[G:H_1]=[G:H_2]$.
\end{proof}


\paragraph{Acknowledgements.} The author would like to thank Zlil Sela and Alessandro Sisto for suggesting to look at Mineyev's bicombing, Gilbert Levitt for suggesting \cref{cor: free by cyclic},  Michah Sageev for his suggestions regarding the introduction, Alex Margolis for his help with group cohomology, Emily Stark for pointing out mistakes in a previous version, Mladen Bestvina for his help with \cref{lem: Cech cohomology}, and the anonymous referees for their many valuable corrections and improvements.

\section{Notation and conventions}
\label{sec: notation}
\paragraph{Asymptotic notation} We write $M\ll_X N$ if there exists $0<\alpha<\infty$ that depends on $X$ (but does not depend on $M,N$) such that $M\le \alpha N$. 

We write $M\asymp_X N$ if $M\ll_X N$ and $N\ll_X M$, or equivalently there exist $0<\alpha,\beta<\infty$ that depend on $X$ (but not on $M,N$) such that $\alpha N \le M \le \beta N$. Similarly, we write $M\asymp_XN$ when $\alpha,\beta$ depend on $X$.

\paragraph{Constants.}
Throughout the text, a cocompact one-ended simplicial hyperbolic graph $X$ will be fixed.
\textbf{We will use the letter $\adelta$ (in bold) to denote some non-specified ``big'' positive constant that depends only on $X$}, one can think of $\adelta$ as a placeholder for $O_X(1)$ (or more precisely $\Theta_X(1)$ since we assume that the constant is positive). Each appearance of $\adelta$ thus stands for a (possibly) different constant that depends only on $X$. 
For instance, the inequality ``$M \le \adelta N$'' stands for ``$M\ll_X N$'', namely, ``there exists $\delta_1 = \delta_1(X)$ such that $M \le \delta_1 N$''.
In particular, the reader should expect to see things like $2\adelta=\adelta=\adelta^2$,  implications of the sort ``$M_1\le \adelta N_1\text{ and }M_2\le \adelta N_2 \implies M_1 M_2 \le \adelta  N_1 N_2$'' and ``if $r \le \adelta$ then the number of vertices in a ball of radius $r$ in $X$ is at most $\adelta$'', \textit{etc.} 

In some arguments we use $\delta_1,\delta_2,\dots$ (and other Greek letters) to denote specific constants that depend only on $X$. Note that these are not placeholders for $O_X(1)$, but rather explicit numbers, and so we will write explicitly ``there exists $\delta_1 = \delta_1(X)$ such that \dots''.

\paragraph{Simplicial complexes.}
Let $X$ be a simplicial complex. 
By abuse of notation we denote its topological realization by $X$. We denote by $X^i$ its (oriented) $i$-simplices, by $X^{(i)}$ its $i$-skeleton and by $C_i(X)$ its rational $i$-chains.
For a simplex $s\in X^i$ we denote its inverse orientation by $-s$ and its unoriented simplex by $|s|$.
Formally, $s\in X^i$ is an $(i+1)$-tuple of vertices of $X$ ordered up to an even permutation, and $|s|$ is simply the unordered set corresponding to $s$.
Let $a= \sum _s a_s s\in C_i(X)$ be a rational i-chain (where $a_s=0$ for all but finitely many $s\in X^i$). When $i\ge 1$, we will assume that $a_s\ge 0$ and if $a_s > 0$ then $a_{-s}=0$. We denote the oriented support of $a$ by $\supp(a) = \{s \;| \;a_s \ne 0 \}\subseteq X^i$, its unoriented support by $|\supp(a)| = \{|s|\; \mid \; a_s \ne 0\}$, its 1-norm by $\|a\|_1 = \sum_s |a_s|$, and its $\infty$-norm by $\|a\|_\infty = \max_s |a_s|$. On $0$-cycles we also have the augmentation map $\varepsilon (\sum _s a_s s) = \sum_s a_s$.
For an (oriented) edge $e$, denote its endpoints by $e_+,e_-$ so that $\partial e = e_+ - e_-$.

For $x,y\in X^0$, we denote by $d(x,y) \in \nnN$ the length of the shortest path between them in $X^{(1)}$, this length is achieved by some geodesic, by which we mean a sequence of vertices $x=x_0,x_1,\dots,x_n=y$ such that any consecutive pair is an edge of $X$. We denote such a geodesic by $\geod{x}{y}$.
We denote by $[x,y]=\{z\in X^0 \;| \;d(x,z)+d(z,y)=d(x,y)\}$ the interval between $x,y$. Equivalently, $[x,y]$ is the union of all geodesics from $x$ to $y$. 

For a subset $A\subseteq X^0$ we denote its $r$-neighborhood by $N_r(A) = \{ b\in X^0 \;|\; \exists a\in A: d(a,b)\le r\}$, and if the subset is a singleton $A=\{a\}$ we simply denote $B_r(a)$ (the closed ball of radius $r$ around $a$).
For subsets $A,B \subseteq X^0$ we denote by $d(A,B)$ their Hausdorff distance, i.e. $d(A,B)=\inf \{ r \;|\;(B\subseteq N_r(A))\wedge( A\subseteq N_r(B))\}.$
In particular, viewing simplices as subsets of $X^0$ this defines a distance between simplices. 
For a subset $A\subseteq X$, we denote by $\calN_r(A) = \{ f \in X^1 \;|\; \exists a\in A: d(\{a\},f)\le r\}$ the set of edges of $X$ which are at Hausdorff distance at most $r$ from some element of $A$. In other words, the set $\calN_r(A)$ is the set of edges both of whose endpoints are at distance at most $r$ from some element of $A$. As before, $\calB_r(a)=\calN_r(\{a\})$ denotes the edges in the closed ball of radius $r$ around $a$.

A \emph{simplicial action} of a group $G$ on $X$ is an action of $G$ on the vertices of $X$ which maps simplices to simplices. 
Such an action gives rise to an action of $G$ on the realization of $X$ by extending the action on the vertices linearly to simplices.
The action is free if the stabilizers of simplices are trivial in $G$, or in other words, if the induced action on the realization of $X$ is free. 
Note that even if the action $G\actson X$ is free, the quotient $X/G$ is not necessarily simplicial. 
To remedy this we can consider the action of $G$ on the barycentric subdivision $\dot{X}$ of $X$. This action is free and $\dot{X}/G$ is simplicial.
Therefore, throughout this text, by a \emph{free simplicial action} $G\actson X$ we will mean a free simplicial action such that $X/G$ is simplicial. 

Throughout the text, we will assume that $X$ is hyperbolic, \ie for all $x,y,z\in X$ and choices of geodesics $\geod{x}{y},\geod{x}{z},\geod{z}{y}$ connecting them,  $\geod{x}{y}\subset N_\adelta (\geod{x}{z}\cup \geod{z}{y})$. In particular, $[x,y]\subseteq N_\adelta (\geod{x}{y})$.

\section{Globally stable bicombing}\label{sec: global stability}
\begin{definition}
Let $X$ be a simplicial graph, let $G\actson X$ be a simplicial action,  and let $q:X^0\times X^0 \to C_1(X)$. 
\begin{enumerate}[label = (GSB\arabic*)]
    \item \label{gsb: bicombing}$q$ is \emph{a $\bbQ$-bicombing} if for all $x,y\in X^0$, $\partial q(x,y)=y-x$ and $q(x,x)=0$,
    \item \label{gsb: quasigeodesic}$q$ is \emph{quasigeodesic} if for all $x,y\in X^0$ and geodesic $\geod{x}{y}$, $|\supp q(x,y)| \subseteq \calN_\adelta (\geod{x}{y})$ and $\|q(x,y)\|_1\le \adelta \cdot d(x,y)$,
    \item \label{gsb: equivariance}$q$ is \emph{$G$-equivariant} if for all $g\in G$, and $x,y\in X^0$, $gq(x,y)=q(gx,gy)$,
    \item \label{gsb: antisymmetry}$q$ is \emph{anti-symmetric} if for all $x,y\in X^0$, $q(x,y)=-q(y,x)$, and
    \item \label{gsb: defect}$q$ has \emph{bounded defect} if for all $x,y,z\in X^0$, $\|q(x,y)+q(y,z)+q(z,x)\|_1\le \adelta$.
\end{enumerate}
A \emph{globally stable bicombing} is a function $q$ that satisfies all of the above with respect to the group $\Aut(X)$.
\end{definition}

\begin{remark}
When $X$ is cocompact, the inequality $\|q(x,y)\|_1\le \adelta \cdot d(x,y)$ is also implied by the bounded defect and equivariance properties. To see this, note that if $d(x,y)=n$ there is a path $x=x_0,x_1,\dots,x_n=y$ in $X$. By using the bounded defect $n$ times we get $\|q(x,y)\|_1\le \sum_{i=0}^{n-1} \|q(x_i,x_{i+1})\|_1 +\adelta \cdot n $. There are finitely many orbits of edges in $X$, and so by the equivariance of $q$,  $\|q(x_i,x_{i+1})\|_1\le \adelta$. It follows that $$\|q(x,y)\|_1\le \sum_{i=0}^{n-1} \|q(x_i,x_{i+1})\|_1 +\adelta \cdot n \le \adelta \cdot n = \adelta \cdot d(x,y).$$
\end{remark}

\begin{theorem}[{Mineyev \cite[Theorem 10]{mineyev2001straightening}}]
Every locally finite hyperbolic graph with a cocompact group action, supports a globally stable bicombing.
\end{theorem}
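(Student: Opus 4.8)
The plan is to obtain the theorem by \emph{straightening} a naive equivariant bicombing into a globally stable one. The only genuinely new property to arrange is the bounded defect \ref{gsb: defect}, and its proof rests on the exponential divergence of geodesics in a $\adelta$-hyperbolic graph. First I would dispose of anti-symmetry \ref{gsb: antisymmetry}: if $q$ satisfies \ref{gsb: bicombing}, \ref{gsb: quasigeodesic}, \ref{gsb: equivariance} and \ref{gsb: defect}, then the reduced chain $q'(x,y)$ representing $\tfrac12\big(q(x,y)-q(y,x)\big)$ is anti-symmetric, has boundary $y-x$, has support inside $\calN_\adelta(\geod{x}{y})$ (because $[x,y]\subseteq N_\adelta(\geod{x}{y})$ forces all geodesics between $x$ and $y$, hence the supports of $q(x,y)$ and $q(y,x)$, into a common $\adelta$-neighbourhood) with $1$-norm only decreased by reduction, is still $\Aut(X)$-equivariant, and has defect $q'(x,y)+q'(y,z)+q'(z,x)=\tfrac12\big((q(x,y)+q(y,z)+q(z,x))-(q(y,x)+q(x,z)+q(z,y))\big)$ of $1$-norm at most the average of the defects of $q$ at the triples $(x,y,z)$ and $(y,x,z)$, hence $\le\adelta$. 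So it suffices to produce an $\Aut(X)$-equivariant quasigeodesic $\bbQ$-bicombing of bounded defect.

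The naive candidate fails instructively. Using local finiteness, for $x,y\in X^0$ let $p(x,y)\in C_1(X)$ be the average of the oriented edge-chains of all geodesics from $x$ to $y$. This is manifestly $\Aut(X)$-equivariant, satisfies $\partial p(x,y)=y-x$, and is quasigeodesic. But $p(x,y)+p(y,z)+p(z,x)$ is a $1$-cycle running once around a $\adelta$-thin geodesic triangle, with $1$-norm typically of order $d(x,y)+d(y,z)+d(z,x)$, which is not bounded. The culprit is that $y\mapsto p(x,y)$ is far from ``Lipschitz'': replacing $y$ by a neighbour $y'$ can shift the averaging weights and change $p(x,y)$ drastically at points far from $y$.

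The heart of the argument, following Mineyev, is to replace $p$ by a canonically constructed $q$ that depends on its endpoints in a controlled way. One introduces auxiliary ``smearing'' data --- for each ordered pair a probability chain $\widehat f(x,y)$ supported in a bounded ball around $y$ --- and sets $q(x,y)$ to be a weighted sum $\sum_i c_i(x,y)$ of short chains telescoping $\widehat f$ along a geodesic $\geod{x}{y}$. The geometric engine is that geodesics $\geod{x}{y}$ and $\geod{x}{y'}$ with $d(y,y')\le 1$ stay $\adelta$-close except on a segment of bounded length near $y,y'$, and the width of the region where they differ decays geometrically as one moves toward $x$. This exponential decay makes the defining sums converge (local finiteness and cocompactness make all the data uniform and finitely supported), makes $\|q(x,y)\|_1$ comparable to $d(x,y)$ (the tails form a geometric series), keeps $\supp q(x,y)\subseteq\calN_\adelta(\geod{x}{y})$, and --- since every ingredient is assembled from $\Aut(X)$-invariant data (distances, sets of geodesics, balls) --- keeps equivariance. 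The payoff is the \emph{stability estimate} $\|q(x,y)-q(x,y')\|_1\le\adelta$ whenever $d(y,y')\le1$, with the difference supported near $\{y,y'\}$, and symmetrically in the first variable.

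Finally I would deduce bounded defect from stability by the standard tripod argument: for $x,y,z\in X^0$ the $\adelta$-thinness of the triangle singles out a bounded ``centre'' region, and along $\geod{x}{z}$ there is a vertex $w$ within $\adelta$ of $\geod{x}{y}$ past which $\geod{x}{z}$ has left $N_\adelta(\geod{x}{y})$; telescoping $q(x,y)$, $q(y,z)$, $q(z,x)$ by a bounded number of applications of the stability estimate near the centre makes the long portions cancel in pairs and leaves a chain of $1$-norm $\le\adelta$. The main obstacle is the quantitative core of the third step: designing a \emph{single} canonical averaging procedure that is at once equivariant, quasigeodesic (bounded support and a \emph{linear} $1$-norm bound) and Lipschitz in the endpoints, and in particular proving the exponential-divergence estimate for geodesics and checking that it makes the straightening series converge with a $1$-norm bound of exactly linear order. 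Everything else --- the symmetrization, the bicombing identity, and the defect-from-stability telescoping --- is comparatively formal once that core is in place.
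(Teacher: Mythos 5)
The paper does not actually prove this theorem; it cites Mineyev (\cite[Theorem 10]{mineyev2001straightening}) and adds only a short remark explaining why the freeness hypothesis in Mineyev's paper can be dropped: one replaces Mineyev's initial equivariant geodesic bicombing (which used freeness to make a choice of geodesic) with the rational geodesic bicombing $p(x,y)=\tfrac1n\sum_i a_i$ averaging over all geodesics, which is manifestly $\Aut(X)$-invariant, and then observes that the rest of Mineyev's construction is purely geometric and so stays $\Aut(X)$-equivariant. Your sketch is therefore doing more work than the paper does: you are outlining Mineyev's actual argument rather than invoking it. On the positive side, your opening reduction (dropping \ref{gsb: antisymmetry} by passing to the reduced chain $\tfrac12(q(x,y)-q(y,x))$ and checking the other four properties survive) is clean and correct, and the ``naive candidate'' $p$ you write down is precisely the rational geodesic bicombing the paper uses in its remark to handle cocompact non-free actions, so your route would also establish the slightly stronger statement the paper needs.

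However, as a proof of the theorem itself your proposal is incomplete, and you say so yourself: the entire difficulty of Mineyev's theorem is in the third step, namely producing the ``smearing'' chains $\widehat f(x,y)$, establishing the exponential-divergence estimate for geodesics, proving the resulting telescoped sum converges with a \emph{linear} $1$-norm bound (not merely finite), and proving the Lipschitz stability estimate $\|q(x,y)-q(x,y')\|_1\le\adelta$ for $d(y,y')\le 1$ with controlled support. Without that quantitative core the ``defect-from-stability'' tripod argument has nothing to telescope, so what you have is a roadmap, not a proof. If the goal were to actually reproduce this result rather than cite it, this is where all the work remains; if the goal is to match the paper, what the paper itself supplies is only the observation that $p$ is $\Aut(X)$-invariant, which you have, plus the citation.
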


\begin{remark}
In \cite{mineyev2001straightening}, the group is assumed to act freely on the graph, however, this is not actually used in the proof except at the very beginning where an equivariant geodesic bicombing is assumed. This can be replaced with the rational geodesic bi-combing $p(x,y)=\frac{1}{n}\sum_{i=1}^n a_i$ where $n=n(x,y)$ is the number of geodesics connecting $x,y$ and $a_1,\dots,a_n$ are the obvious 1-chains corresponding to them. This geodesic bi-combing $p$ is clearly $\Aut(X)$-invariant. 
The rest of the construction in \cite{mineyev2001straightening} is purely geometric and remains $\Aut(X)$-equivariant.
\end{remark}

Throughout the text, assume that a globally stable bicombing $q$ is fixed on our fixed locally finite cocompact hyperbolic graph $X$.

\begin{lemma}\label{boundedness of the bicombing}
For all $x,y\in X^0$, $\|q(x,y)\|_\infty\le \adelta$.
\end{lemma}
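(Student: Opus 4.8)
The plan is to fix an oriented edge $e\in X^1$ and bound the single coefficient $q(x,y)_e$ by $\adelta$, uniformly in $x,y$; since $e$ is arbitrary, this gives $\|q(x,y)\|_\infty\le\adelta$. We may assume $q(x,y)_e\ne 0$, so by \ref{gsb: quasigeodesic} the edge $e$ lies within distance $\adelta$ of some vertex $p$ of a fixed geodesic $\geod{x}{y}$ (both endpoints of $e$ are within $\adelta$ of $p$). The degenerate cases ($x=y$, or $p\in\{x,y\}$) are immediate since $q(\cdot,\cdot)$ vanishes on the diagonal by \ref{gsb: bicombing}.

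Step 1 (split at $p$). By \ref{gsb: defect} together with \ref{gsb: antisymmetry}, $q(x,y)$ equals $q(x,p)+q(p,y)$ up to a $1$-chain of $1$-norm at most $\adelta$; hence it suffices to bound $|q(x,p)_e|$ and $|q(p,y)_e|$, and by \ref{gsb: antisymmetry} the second is an instance of the first with $x,y$ interchanged. The point of splitting at $p$ is that $e$ now sits within $\adelta$ of an \emph{endpoint} of each of the geodesics $\geod{x}{p}$, $\geod{p}{y}$ — exactly what keeps the error terms in Step 2 under control.

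Step 2 (telescope along $\geod{x}{p}$). Write the initial segment of $\geod{x}{y}$ as $x=w_0,w_1,\dots,w_k=p$. Iterating \ref{gsb: defect} with \ref{gsb: antisymmetry} gives, by induction on $k$, the identity
$$q(x,p)=\sum_{i=0}^{k-1}q(w_i,w_{i+1})-\sum_{j=1}^{k-1}\eta_j,\qquad \eta_j:=q(w_0,w_j)+q(w_j,w_{j+1})+q(w_{j+1},w_0),$$
where each $q(w_i,w_{i+1})$ has $\infty$-norm at most $\|q(w_i,w_{i+1})\|_1\le\adelta$ (using $d(w_i,w_{i+1})=1$ in \ref{gsb: quasigeodesic}) and each $\eta_j$ has $\infty$-norm at most $\|\eta_j\|_1\le\adelta$ by \ref{gsb: defect}. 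The key claim is that at most $\adelta$ of the terms $q(w_i,w_{i+1})$, and at most $\adelta$ of the terms $\eta_j$, have $e$ in their support: if $e\in\supp q(w_i,w_{i+1})$ then, by \ref{gsb: quasigeodesic} applied to the edge $\{w_i,w_{i+1}\}$, some endpoint of $e$ is within $\adelta$ of $w_i$, hence $d(w_i,p)\le\adelta$, and since $w_0,\dots,w_k$ is a geodesic $d(w_i,p)=k-i$, so $i\ge k-\adelta$; and if $e\in\supp\eta_j$ then — applying \ref{gsb: quasigeodesic} to the geodesic subsegments $w_0,\dots,w_j$ and $w_0,\dots,w_{j+1}$ — some $w_l$ with $l\le j+1$ has an endpoint of $e$ within $\adelta$, so $k-l=d(w_l,p)\le\adelta$, whence $j\ge k-\adelta-1$. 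Thus $q(x,p)_e$ is a sum of at most $\adelta$ terms each of size at most $\adelta$, so $|q(x,p)_e|\le\adelta$; combining with the symmetric bound for $|q(p,y)_e|$ and the $\adelta$ error from Step 1 yields $|q(x,y)_e|\le\adelta$.

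The only non-routine point, and the step I expect to be the crux, is the localization of the correction terms $\eta_j$. Telescoping $q(x,y)$ left-to-right over all of $\geod{x}{y}$ does not suffice: although only $\adelta$ many of the main terms $q(w_i,w_{i+1})$ meet $e$, each $\eta_j$ involves the entire initial segment $w_0,\dots,w_{j+1}$, so $e$ lies in its support for every $j$ beyond the location of $e$ — that is $\Theta(d(x,y))$ terms, recovering only the already-known bound $\|q(x,y)\|_1\le\adelta\cdot d(x,y)$. Pre-splitting at a vertex $p$ adjacent to $e$ removes this obstruction: it places $e$ at the far end of each sub-geodesic, so ``the initial segment up to $j$'' reaches $e$ only for the last $\adelta$ values of $j$. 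Everything else is bookkeeping with the triangle inequality and the fact that two vertices of a geodesic are at distance equal to their index difference.
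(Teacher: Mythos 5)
Your proof is correct, and it takes a genuinely different route from the paper's. Both proofs rest on the same two ingredients — the quasigeodesic property \ref{gsb: quasigeodesic} to confine the support of $q$ near the chosen geodesic, and the bounded defect \ref{gsb: defect} to decompose $q(x,y)$ — but the decompositions are organized differently. The paper takes $z=e_+$, lets $x',y'$ be the points of $\geod{x}{z}$ and $\geod{y}{z}$ at a fixed distance $\delta_1+1$ from $z$, and applies bounded defect just twice to write $q(x,y)\approx q(x,x')+q(x',y')+q(y',y)$; the choice of $x',y'$ guarantees the outer terms miss $e$ entirely (their supports lie in $\calN_{\delta_1}(\geod{x}{x'})$, $\calN_{\delta_1}(\geod{y'}{y})$, which are $\delta_1$-far from $z$) while the middle term has $\|q(x',y')\|_1\le\adelta\cdot d(x',y')\le\adelta$ because $d(x',y')\le 2\delta_1+2$. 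So all of $q(x,y)_e$ is accounted for by a single short term, and the argument closes in three lines. You instead split once at a vertex $p$ of $\geod{x}{y}$ near $e$, then telescope each half with $\Theta(d(x,p))$ applications of bounded defect, and close by the counting argument that only $\adelta$-many of the main terms $q(w_i,w_{i+1})$ and $\adelta$-many of the corrections $\eta_j$ can have $e$ in their support. Your telescope identity and the counting are both correct, and your discussion of why a naive telescope of $q(x,y)$ itself would fail (the $\eta_j$ have $q(w_0,w_j)$ in them, which sees $e$ for a linear range of $j$) correctly identifies the role of the pre-split: it pushes $e$ to the far end of each sub-geodesic. The paper's truncation is the tighter argument — constant work rather than linear — and is worth internalizing as the cleaner way to localize a bicombing near a single edge; your version illustrates well why the bounded-defect inequality really is a local estimate and not merely a $d(x,y)$-scale one.
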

\begin{proof}
    Let $e$ be an edge in $\supp q(x,y)$, and let $z=e_+$ be its endpoint.
    By \ref{gsb: quasigeodesic}, there exists $\delta_1=\delta_1(X)$ such that $\supp q(w_1,w_2) \subseteq \calN_{\delta_1}(\geod{w_1}{w_2})$ for all $w_1,w_2\in X$.
    Let $x'$ (resp. $y'$) be the point in $\geod{x}{z}$ (resp. $\geod{y}{z}$) at distance $\delta_1+1$ from $z$ if such exists, otherwise take $x'=x$ (resp. $y'=y$).
    Note that $d(x',y')\le 2\delta_1+2\le \adelta$, and by \ref{gsb: quasigeodesic}, $\|q(x',y')\|_1\le \adelta \cdot d(x',y') \le \adelta$. 
    
    By the choice of $x',y'$ we have $e\notin \calN_{\delta_1}(\geod{x}{x'}) \cup \calN_{\delta_1}(\geod{y'}{y})$,
    and by \ref{gsb: quasigeodesic}, this implies that $e$ is not in the support of $q(x,x')$ nor $q(y,y')$.
    Applying the bounded defect \ref{gsb: defect} twice we get $\|q(x,y) - q(x,x') - q(x',y') -q(y',y)\|_1 \le \adelta $. Since $e$ is not in the support of $q(x,x'),q(y',y)$ we see that the coefficient of $e$ in $q(x,y)$ is bounded by $\|q(x',y')\|_1 + \adelta \le \adelta$. 
\end{proof}

\begin{lemma}\label{lem: lower bound on bicombing}
There exists $\rho$ such that for all $x\ne y\in X^0$ and for all $z\in [x,y]$, the restriction $a_z = q(x,y)|_{\calN_{\rho}(z)}$ of $q(x,y)$ to the edges in the ball of radius $\rho$ around $z$ satisfies $\|a_z\|_1\ge 1$.
\end{lemma}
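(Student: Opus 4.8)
The plan is to bound $\|a_z\|_1$ from below by pairing the $1$-chain $q(x,y)$ against a $0$-cochain whose coboundary is concentrated near $z$, using the identity $\partial q(x,y)=y-x$ from \ref{gsb: bicombing}. Set $n=d(x,z)$ and let $\phi\colon X^0\to\{0,1\}$ be the $0$-cochain with $\phi(w)=0$ for $d(x,w)\le n$ and $\phi(w)=1$ for $d(x,w)\ge n+1$. As $d(x,\cdot)$ varies by at most $1$ along an edge, the coboundary $\delta\phi$, $(\delta\phi)(e)=\phi(e_+)-\phi(e_-)$, has $\|\delta\phi\|_\infty\le1$ and is supported exactly on the edges $e$ with $\{d(x,e_+),d(x,e_-)\}=\{n,n+1\}$. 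I would first dispose of the case $z=y$ by symmetry: apply the bound proved below for the case $z\ne y$ to the reversed pair $(y,x)$, for which the point $z=y$ is the \emph{first} vertex and hence distinct from the second vertex $x$; then \ref{gsb: antisymmetry} gives $q(y,x)=-q(x,y)$, and the $\ell^1$-norm on $\calN_\rho(z)$ is unchanged under reorientation. So assume $z\ne y$. Then $d(x,y)=d(x,z)+d(z,y)=n+d(z,y)\ge n+1$, so $\phi(x)=0$ and $\phi(y)=1$, and hence $\langle\delta\phi,q(x,y)\rangle=\phi(\partial q(x,y))=\phi(y)-\phi(x)=1$. Since $q(x,y)$ has nonnegative coefficients and $\|\delta\phi\|_\infty\le1$, this forces $\big\|\,q(x,y)|_{\supp(\delta\phi)}\,\big\|_1\ge1$.

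It remains to show that $\supp q(x,y)\cap\supp(\delta\phi)\subseteq\calN_\rho(z)$ for a constant $\rho$ depending only on $X$; granting this, $q(x,y)|_{\supp(\delta\phi)}$ is a subchain of $a_z=q(x,y)|_{\calN_\rho(z)}$, and we conclude $\|a_z\|_1\ge1$. This is the one genuinely geometric step, and it is where \ref{gsb: quasigeodesic} and hyperbolicity enter. Fix the geodesic $\geod{x}{y}=(x=x_0,x_1,\dots,x_N=y)$ with $N=d(x,y)$, and take $e\in\supp q(x,y)$ with, say, $d(x,e_-)=n$. By \ref{gsb: quasigeodesic} there is a vertex $x_m$ of $\geod{x}{y}$ at distance $\le\adelta$ from both endpoints of $e$; comparing distances to $x$ gives $|m-n|\le\adelta$, hence $d(x_m,x_n)\le\adelta$. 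On the other hand, hyperbolicity gives $z\in N_\adelta(\geod{x}{y})$, so $d(z,x_j)\le\adelta$ for some $j$, and comparing distances to $x$ again gives $|j-n|\le\adelta$, so $d(z,x_n)\le\adelta$. The triangle inequality then bounds $d(z,e_-)$, and so also $d(z,e_+)$, by a constant depending only on $X$, which we take to be $\rho$; thus $e\in\calN_\rho(z)$.

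The step I expect to be the main obstacle is precisely this localization, together with the realization that one cannot separate $x$ from $y$ by deleting a metric ball $B_\rho(z)$: in a general one-ended hyperbolic graph the complement of a ball stays connected, as paths can route around it, so a ball is no separator. The cochain $\phi$ circumvents this by separating $x$ from $y$ along the sphere of radius $n$ about $x$ — which \emph{is} a separator but is unbounded — and the role of the quasigeodesic hypothesis is exactly to guarantee that the chain $q(x,y)$ meets this sphere only near the geodesic, hence near $z$. Once this is in place, the proof reduces to the distance bookkeeping above and routine chain/cochain manipulations.
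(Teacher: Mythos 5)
Your proof is correct, and it takes a genuinely different route from the paper. The paper decomposes $a=q(x,y)$ geometrically into $a_x+a_z+a_y$ according to which part of the broken geodesic $\geod{x}{z}\geod{z}{y}$ each edge is $\delta_1$-close to, then argues that $\partial a_x$ and $\partial a_y$ have disjoint supports (because a shared endpoint would force the offending edge into $\calN_\rho(z)$, contradicting the definition of $a_x,a_y$), and finally uses the augmentation map to conclude $\|\partial a_z\|_1=\|b_y-b_x\|_1\ge\varepsilon(b_y)+\varepsilon(b_x)=2$. You instead pair $q(x,y)$ against the coboundary of the indicator $0$-cochain of $\{d(x,\cdot)\ge n+1\}$, so the inequality $\|q(x,y)|_{\supp\delta\phi}\|_1\ge\langle\delta\phi,q(x,y)\rangle=\phi(y)-\phi(x)=1$ is immediate from $\partial q(x,y)=y-x$, nonnegativity of the coefficients, and $\|\delta\phi\|_\infty\le1$; the geometry only enters in the localization $\supp q(x,y)\cap\supp\delta\phi\subseteq\calN_\rho(z)$. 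Both arguments ultimately lean on the same two facts --- the bicombing's boundary identity and the quasigeodesic property \ref{gsb: quasigeodesic} combined with the thinness $[x,y]\subseteq N_\adelta(\geod{x}{y})$ --- so the geometric content is the same, but your duality formulation buys you a cleaner derivation: it avoids the slightly delicate disjointness-of-boundary-supports step, at the cost of a mildly awkward special-case reduction for $z=y$ (which the paper's formulation handles transparently). One small stylistic note: your $z=y$ reduction is sound but could be folded into the main argument by defining $\phi$ relative to $\max(n,1)$ or simply noting that when $z=y$ one can take $\phi$ to be the indicator of $\{d(x,\cdot)\ge d(x,y)\}$ and run the same localization.
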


\begin{proof}
By \ref{gsb: quasigeodesic}, there exists $\delta_1=\delta_1(X)$ such that $q(x',y')\subseteq \calN_{\delta_1}(\geod{x'}{y'})$ for all $x',y'\in X$ and geodesics $\geod{x'}{y'}$. 
Set $\rho=2\delta_1+1$.
Denote $a = q(x,y)$, and let $a_z = q(x,y)|_{\calN_{\rho}(z)}$ as in the statement.
Fix a geodesic $\geod{x}{z}$ between $x$ and $z$, and $\geod{z}{y}$ between $z$ and $y$. Their concatenation $\geod{x}{y}=\geod{x}{z}\geod{z}{y}$ is a geodesic from $x$ to $y$.
Set $a_x$ to be the restriction of $q$ to the set $$\{ e\in \supp a - \supp a_z : \exists w \in \geod{x}{z} : d(w,e) \le \delta_1\}$$
And similarly, $a_y$ is the restriction to 
$$\{ e\in \supp a - \supp a_z : \exists w \in \geod{z}{y} : d(w,e) \le \delta_1\}$$
Since $\supp a \subset \calN_{\delta_1}(\geod{x}{y})$, it is evident that $a = a_x + a_z + a_y$. Thus,  $y-x = \partial a = \partial a_x + \partial a_z + \partial a_y.$
The supports of $\partial a_x$ and $\partial a_y$ are disjoint as otherwise there would be $e$ and $e'$ in the supports of $a_x$ and $a_y$ respectively with a common endpoint, however, if this holds it is easy to see that $e$ or $e'$ belongs to $\calN_{\rho}(z)$ in contradiction to the assumption that $\supp a_z$ is disjoint from both $\supp a_x$ and $\supp a_y$.

If we denote, $b_x = \partial a_x + x$, and $b_y = y - \partial a_y$, then $\partial \alpha_z = b_y-b_x$. 
Since $b_x$ and $x$ are homologous 0-chains they have the same image under the augmentation map $\varepsilon (b_x)= \varepsilon(x) = 1 $. Similarly $\varepsilon(b_y) = \varepsilon (y) =1$.
Finally, since $b_x$ and $b_y$ have disjoint supports, we get 
\begin{equation*}
    \|a_z\|_1 \ge \tfrac12\| \partial a_z\|_1 = \tfrac12\|  b_y - b_x\|_1 = \tfrac12(\|b_y\|_1 + \|b_x\|_1 )\ge \tfrac12 (\varepsilon(b_y) +  \varepsilon(b_x))= 1.
\end{equation*}
\end{proof}

\begin{corollary}\label{lower bound on contribution of edge}
There exists $\lambda=\lambda(X),\rho=\rho(X)$ such that for all $x\ne y\in X^0$ and for all $z\in [x,y]$, there exists an edge in ${B_{\rho}(z)}$ whose coefficient in $q(x,y)$ is at least $1/\lambda$.
\end{corollary}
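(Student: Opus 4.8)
The plan is to deduce this immediately from \cref{lem: lower bound on bicombing} by a pigeonhole argument, using the uniform bound on the size of balls that comes from local finiteness together with cocompactness.

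First I would fix $\rho = \rho(X)$ to be the constant provided by \cref{lem: lower bound on bicombing}. Since $X$ is a locally finite graph admitting a cocompact group action, it has bounded vertex degree, and hence there is a constant $N = N(X)$ (one of the $\adelta$'s) bounding the number of edges in $\calN_\rho(z)$ for every vertex $z \in X^0$. Set $\lambda = N$. Now fix $x \ne y \in X^0$ and $z \in [x,y]$, and let $a_z = q(x,y)|_{\calN_\rho(z)}$ be the chain from \cref{lem: lower bound on bicombing}, so that $\|a_z\|_1 \ge 1$ and $a_z$ is supported on the at most $N$ edges of $\calN_\rho(z) = \calB_\rho(z)$. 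Writing $a_z = \sum_e (a_z)_e\, e$, we obtain $1 \le \|a_z\|_1 = \sum_e |(a_z)_e| \le N \cdot \max_e |(a_z)_e|$, so some edge $e \in \calB_\rho(z)$ has $|(a_z)_e| \ge 1/N = 1/\lambda$. By the conventions of \S\ref{sec: notation} on rational chains (nonnegative coefficients, no simultaneously oriented and anti-oriented edge), $(a_z)_e$ is exactly the coefficient of $e$ in $q(x,y)$, so this edge has coefficient at least $1/\lambda$ in $q(x,y)$, as required.

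I do not expect any genuine obstacle here: the entire content is \cref{lem: lower bound on bicombing}, and the rest is the elementary observation that a chain of $\ell^1$-norm at least $1$ supported on boundedly many edges must have an entry of absolute value at least the reciprocal of that bound. (The upper bound $\|q(x,y)\|_\infty \le \adelta$ from \cref{boundedness of the bicombing} plays no role in this direction; it will presumably be used elsewhere to control the total weight of the pattern.)
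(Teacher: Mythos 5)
Your proof is correct and matches the paper's: the paper also takes $\rho$ from \cref{lem: lower bound on bicombing} and sets $\lambda$ to be the maximal number of edges in a ball of radius $\rho$, with the pigeonhole step left implicit. Your write-up just makes the pigeonhole explicit.
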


\begin{proof}
The corollary follows from \cref{lem: lower bound on bicombing} by taking $\rho$ to be as in \cref{lem: lower bound on bicombing} and $\lambda$ to be the maximal number of edges in a ball of radius $\rho$ in $X$.
\end{proof}

\section{Weighted singular patterns}\label{sec: singular patterns}
Dunwoody \cite{dunwoody1985accessibility} introduced patterns on 2-dimensional complexes in order to study actions on trees. 
Inspired by Delzant's foliation \cite{delzant1995image} for the Rips-Sela stable cylinders \cite{rips1995canonical}, we introduce in this section a looser notion of patterns -- \emph{weighted singular pattern} -- that will be used in the setting of actions on hyperbolic spaces. 
Weighted singular patterns will differ from Dunwoody's patterns in three ways: the tracks of the pattern can intersect (similar to the patterns that give rise to CAT(0) cube complexes \cite{beeker2016resolutions,beeker2017cubical}), they can have singular points (similar to Delzant's foliation \cite{delzant1995image}), and they will be weighted. We will see in \S\ref{sec: resolution} how, starting from an action of a group $G$ on a hyperbolic space with a globally stable bicombing, one can define a weighted singular pattern on a 2-dimensional simplicial complex $\bar K$ with $\pi_1(\bar K) = G$. 

\begin{definition}\label{def: pattern}
Let $K$ be a 2-dimensional simplicial complex. 
A \emph{(singular) pattern} $\calF$ on $K$ is an immersed graph $\calF \looparrowright K$ such that:
\begin{enumerate}[label = (P\arabic*)]
    \item The vertices of $\calF$ are called \emph{connectors} and they are sent injectively to the interiors of edges and 2-simplices of $K$, they are called \emph{regular} or \emph{singular} accordingly.
    \item The edges of $\calF$ are called \emph{segments}, and their interiors are sent to straight line segments in the interior of 2-simplices of $K$ (in particular, they intersect transversely), segments which connect two regular connectors are called \emph{regular}, and are otherwise \emph{singular}.
    Every singular segment has a regular connector as one of its endpoints, and it does not meet any other segment in the interior of the 2-simplex to which it belongs.
    \item There are finitely many connectors and segments in each simplex.
    \item Each connector $c$ is the endpoint of exactly one segment in each 2-simplex containing $c$ in its closure.
\end{enumerate}
The connected components of $\calF$ are called \emph{tracks}. 
A \emph{regular pattern} is a pattern without singular connectors. 
\end{definition}

See \cref{fig: singular pattern} for an example of a singular pattern. Note that tracks are connected components of $\calF$ and not its image, and so, in \cref{fig: singular pattern} the pattern has five tracks.

\begin{figure}
    \centering
    \includegraphics{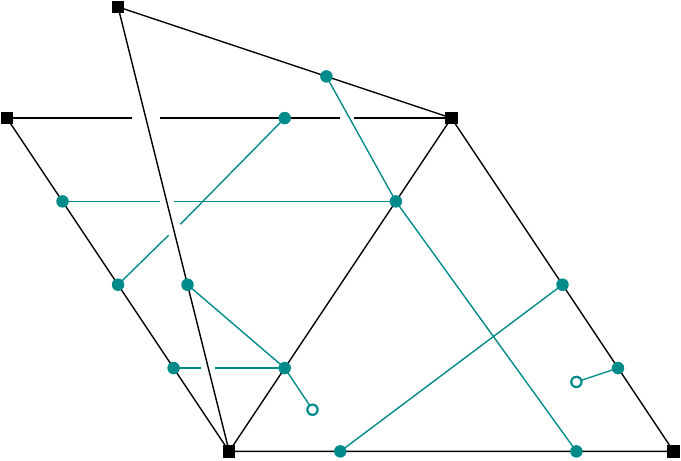}
    \caption{A singular pattern (in cyan) on a 2-complex (in black) consisting of three 2-simplices with a common edge. The regular connectors are shown by full circles, and singular connectors by empty circles.}
    \label{fig: singular pattern}
\end{figure}

\begin{definition}\label{def: weighted pattern}
A \emph{weighted singular pattern}, is a a pair $(\calF,\bfw)$ where $\calF$ is a singular pattern, and $\bfw$ is a function from the set of connectors to non-negative reals $\bbR_{\ge 0}$ 
such that $\bfw (\bfc)=0$ for every singular connector $\bfc$ in $\calF$.

Assuming $K$ is finite, the \emph{weight of a track} $\bft$ is the maximal weight of a connector $\bfw(\bft)=\max_{\bfc\in \bft} \bfw(\bfc)$. The \emph{total weight} of $\calF$ is the sum of weights of its tracks, i.e. $\bfw(\calF)=\sum_{ \bft \subseteq \calF} \bfw( \bft )$.

The \emph{defect} $\df_\bfw(\bfs)$ of a segment $\bfs$ in a weighted singular pattern $(\calF,\bfw)$, is $\df_\bfw(\bfs)=|\bfw(\bfc_1)-\bfw(\bfc_2)|$ where $\bfc_1,\bfc_2$ are the endpoints of $\bfs$.
The \emph{defect} $\df_\bfw( \bft )$ of a track $ \bft $ is defined to be the sum of defects of segments in $ \bft $, i.e. $\df_\bfw( \bft )=\sum_{\bfs\subseteq  \bft } \df_\bfw(\bfs)$.
Finally, the \emph{defect} of $(\calF,\bfw)$ is the sum of defects of its tracks, $\df_\bfw(\calF)=\sum_{ \bft \subseteq \calF} \df_\bfw( \bft )$, or equivalently, the sum of defects of its segments, $ \df_\bfw(\calF)= \sum_{\bfs \subseteq \calF} \df_\bfw(\bfs)$. We will write $\df(\cdot)$ instead of $\df_\bfw(\cdot)$ if $\bfw$ is clear from the context.

A weighted singular pattern $(\calF,\bfw)$ is \emph{perfect} if it is regular and $\df(\calF)=0$.

We denote $\calF'\subset \calF$ if  $\calF'$ is a subgraph of $\calF$, and its immersion $\calF'\looparrowright K$ is the restriction of $\calF\looparrowright K$ to $\calF'$. If $(\calF',\bfw'),(\calF,\bfw)$ are weighted, and $\calF'\subseteq \calF$ we write $\bfw'\le \bfw$ if $\bfw'(\bfc)\le \bfw(\bfc)$ for every connector $\bfc$ in $\calF'$.
\end{definition}

\begin{lemma}\label{lem: removing singularities}
Let $(\calF,\bfw)$ be a weighted singular pattern on a compact simplicial complex $K$. Then there is a perfect weighted pattern $(\calF',\bfw')$ such that $\calF'\subseteq \calF$, $\bfw'\le \bfw$  and $\bfw(\calF)\le \bfw'(\calF')+\df_\bfw(\calF)$.
\end{lemma}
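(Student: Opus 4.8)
The plan is to remove singular connectors one track at a time, showing that each removal decreases the total weight by at most the defect of that track. First I would partition the tracks of $\calF$ into \emph{regular tracks} (those containing no singular connector) and \emph{singular tracks} (those containing at least one singular connector); the regular tracks, together with their weight function, already contribute to $\calF'$ without modification. For a singular track $\bft$, let $\bfc_0$ be a connector realizing the weight $\bfw(\bft)=\bfw(\bfc_0)$, which by definition of a weighted singular pattern must be a \emph{regular} connector (singular connectors have weight $0$, and if all connectors of $\bft$ had weight $0$ then $\bft$ contributes nothing and can simply be deleted). I would then define $\calF'$ by deleting from each singular track every singular connector together with the (singular) segments incident to it. By axiom (P4), a singular connector $\bfc$ lying in a $2$-simplex $\sigma$ is the endpoint of exactly one segment of $\calF$ inside $\sigma$; and by (P2) a singular segment meets no other segment in the interior of its $2$-simplex and has a regular endpoint. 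Hence deleting singular connectors and their incident segments disconnects $\bft$ into a collection of subgraphs each of which is itself a track of a \emph{regular} pattern $\calF'$, and the new weight function $\bfw'$ is just the restriction of $\bfw$ to the surviving (regular) connectors, so $\calF'\subseteq\calF$ and $\bfw'\le\bfw$ hold by construction, and $(\calF',\bfw')$ is perfect since it is regular and consists of regular segments whose endpoints inherit from a single track where — once we verify the weight bound — the defect is controlled.

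The key inequality $\bfw(\calF)\le \bfw'(\calF')+\df_\bfw(\calF)$ I would prove track by track. Fix a singular track $\bft$ with $\bfw(\bft)=\bfw(\bfc_0)$, $\bfc_0$ regular. When we delete the singular connectors, $\bft$ breaks into pieces $\bft_1',\dots,\bft_k'$ in $\calF'$; the piece $\bft_j'$ containing $\bfc_0$ has weight $\bfw'(\bft_j')\ge\bfw(\bfc_0)=\bfw(\bft)$. Thus $\bft$ alone already satisfies $\bfw(\bft)\le\bfw'(\bft_j')\le \sum_{i} \bfw'(\bft_i')$, and this requires no defect at all — the point is that removing a singular connector (of weight $0$) cannot lower the maximum over the surviving regular connectors of the piece that retains $\bfc_0$. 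Summing over all tracks: the regular tracks contribute equally to $\bfw(\calF)$ and $\bfw'(\calF')$, each singular track $\bft$ contributes $\bfw(\bft)$ on the left and at least $\bfw(\bft)$ to the right (via the piece containing its max-weight connector), and the remaining pieces only add more. Hence in fact one gets the stronger bound $\bfw(\calF)\le\bfw'(\calF')$, and a fortiori $\bfw(\calF)\le\bfw'(\calF')+\df_\bfw(\calF)$ since $\df_\bfw(\calF)\ge 0$.

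Wait — I should double-check the subtlety: is it really true that deleting singular connectors leaves a pattern with \emph{no} singular segments? A segment is singular if it is incident to a singular connector; once all singular connectors are gone, every surviving segment has both endpoints regular, hence is regular. And do the surviving pieces satisfy axiom (P4)? A surviving connector $\bfc$ in a $2$-simplex $\sigma$ was the endpoint of exactly one segment of $\calF$ in $\sigma$; if $\bfc$ is regular that segment had a regular connector at its other end in $\sigma$ only if... no, (P4) just says $\bfc$ is an endpoint of exactly one segment per $2$-simplex containing it, and we only delete segments incident to \emph{singular} connectors, so a regular $\bfc$ retains its unique segment in each $2$-simplex unless that segment was incident to a singular connector — but then (P2) forces $\bfc$ to be that segment's regular endpoint, and deleting the segment would violate (P4). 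So I must be more careful: instead of deleting the singular segment outright, I would \emph{re-route} it, or rather, the correct move is to observe that a maximal chain of singular segments emanating from a regular connector $\bfc$ through singular connectors forms a "whisker" — by (P2) each singular segment touches no other segment in its $2$-simplex, so following it leads to a singular connector which by (P4) has exactly one segment per $2$-simplex containing it; the whisker is a path that must terminate (finitely many segments, (P3)) at... Here is the main obstacle, and the resolution I expect the author uses: each singular connector $\bfc$ lies in the interior of a $2$-simplex $\sigma$, and by (P4) is the endpoint of exactly one segment in $\sigma$ — so it is the endpoint of exactly \emph{one} segment total, making it a leaf of $\calF$. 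Then the singular segments form disjoint whiskers hanging off the regular part, each whisker a path $\bfc^{\mathrm{reg}} = \bfc_0, \bfc_1, \dots, \bfc_\ell$ with $\bfc_1,\dots,\bfc_\ell$ singular and $\bfc_\ell$ a leaf; deleting the whisker (keeping $\bfc_0$) preserves (P4) at $\bfc_0$ only if $\bfc_0$'s other segments remain, which they do. Thus the deletion is clean, $\calF'$ is a genuine regular pattern, and the weight/defect bookkeeping above goes through; I would present the whisker structure as the first lemma-internal observation, since recognizing that singular connectors are leaves is the crux.
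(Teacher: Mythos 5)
Your approach is genuinely different from the paper's, and unfortunately it contains two compounding gaps; the second one shows why the paper's author chose to manipulate the weight function rather than the underlying graph.

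\textbf{Gap 1: the pruned graph $\calF'$ is not a pattern.} You correctly observe that every singular connector is a leaf (it lies in the interior of a unique $2$-simplex $\sigma$, and (P4) gives it exactly one incident segment, in $\sigma$). But after you delete a singular connector $\bfc_1$ together with the singular segment $\bfs$ joining it to its regular neighbor $\bfc_0$, axiom (P4) fails \emph{at $\bfc_0$}: $\bfc_0$ lies on an edge of $K$ whose closure is contained in $\sigma$, so (P4) demands that $\bfc_0$ still have exactly one incident segment inside $\sigma$ --- and you just removed it, leaving zero. Your parenthetical ``$\bfc_0$'s other segments remain'' does not help, since those live in other $2$-simplices and (P4) is a per-simplex condition. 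So the object you produce is a subgraph of $\calF$ but not a singular pattern in the sense of Definition 4.1, and the lemma's conclusion is vacated.

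\textbf{Gap 2: even granting a valid $\calF'$, your $\bfw'$ does not make it perfect.} A perfect weighted pattern must be regular \emph{and} have $\df_{\bfw'}(\calF')=0$, i.e.\ $\bfw'$ must be constant along each track. You take $\bfw'=\bfw|_{\calF'}$, which has no reason to be track-constant, so $\df_{\bfw'}(\calF')$ can be strictly positive. Your ``once we verify the weight bound --- the defect is controlled'' does not close this; ``controlled'' is not ``zero''. And here is why this matters: your claimed \emph{stronger} inequality $\bfw(\calF)\le\bfw'(\calF')$ is only available because your $\bfw'$ keeps the full restricted weights; once you force track-constancy (as perfection demands), the weight on a track must drop to something like $\bfw(\bft)-\df_\bfw(\bft)$, and the defect term in the statement is genuinely needed. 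The two failures are not independent accidents --- they are two faces of the same missing idea.

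\textbf{What the paper actually does, for contrast.} The paper leaves the graph $\calF$ alone and only changes the weight: set $\bfw'(\bfc)=\max\{0,\bfw(\bft)-\df_\bfw(\bft)\}$ for every connector $\bfc$ of the track $\bft$ containing it. One checks $\bfw'\le\bfw$ by walking a path in $\bft$ from $\bfc$ to a maximum-weight connector and telescoping defects. Since $\bfw'$ is constant on each track, $\df_{\bfw'}\equiv 0$ automatically; since singular connectors force $\bfw'\equiv 0$ on their tracks, the subpattern $\calF'$ of tracks with $\bfw'\ne 0$ consists of \emph{whole} tracks (so (P4) survives trivially) and contains no singular connectors, hence is regular. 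Finally $\bfw(\bft)\le\bfw'(\bft)+\df_\bfw(\bft)$ holds track-by-track by construction. The moral you should extract: rescale weights to be track-constant and drop whole tracks, rather than perform surgery on the graph --- surgery fights (P4), and restricting $\bfw$ fights $\df=0$.
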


\begin{proof}
Define $\bfw'$ on $\calF$ by $\bfw'(\bfc)=\max\{0,\bfw( \bft )-\df_\bfw( \bft )\}$ for every connector $\bfc$ in a track $ \bft $ of $\calF$. 

Let us first show that $\bfw'\le \bfw$: Let $\bfc$ be a connector, let $ \bft $ be the track containing $\bfc$ and let $\bfc'$ be the connector in $\bft$ with the maximal weight, i.e. $\bfw( \bft )=\bfw(\bfc')$. 
There is a (simple) path of segments $\bfs_1,\dots,\bfs_n$ in $ \bft $ between $\bfc$ and $\bfc'$. By the definition of the defect, we get 
\[\bfw( \bft )-\bfw(\bfc)= \bfw(\bfc')-\bfw(\bfc) \le \df_\bfw(\bfs_1)+\dots +\df_\bfw(\bfs_n) \le \df_\bfw( \bft ).\] It follows that $\bfw'(\bfc)=\max\{0,\bfw( \bft )-\df_\bfw( \bft )\}\le \bfw(\bfc)$.

In particular, if $\bfc$ is a singular connector then $\bfw'(\bfc)\le \bfw(\bfc)=0$. Hence, $(\calF,\bfw')$ is a weighted singular pattern.

By definition, all connectors in $ \bft $ have the same weight with respect to $\bfw'$, and so $\df_{\bfw'}(\calF)=0$.  Define $\calF'$ to be the tracks in $\calF$ on which $\bfw'\ne 0$, we immediately get that $\calF'$ is regular (since any singular connector has weight $0$).
Finally, the equality $\bfw(\calF) \le \bfw'(\calF')+\df_\bfw(\calF)$ is clear from the definition of $\bfw'$.
\end{proof}

\section{Resolutions of globally stable bicombings}\label{sec: resolution}

Let $G$ be a group, and let $K$ be a simply-connected 2-dimensional simplicial complex with a free, cocompact, simplicial action $G\actson K$.
Let $G\actson X$ be a free and cocompact simplicial action on a $\adelta$-hyperbolic graph $X$ with a globally stable bicombing $q$. Let $\Phi: K^0\to X^0$ be a $G$-equivariant map.

For an edge $e\in K^1$, denote $q(e)=q(\Phi(e_-),\Phi(e_+))$. Observe that by \ref{gsb: antisymmetry} and \ref{gsb: equivariance} this definition is anti-symmetric, $q(-e)=-q(e)$, and $G$-equivariant, $q(g. e) = g. q( e)$.

\begin{definition}
A \emph{resolution of $q$ to $K$ via $\Phi$} is a $G$-invariant weighted singular pattern $(\calF,\bfw)$ on $K$ together with a bijection for every edge $e\in K^1$ $$\bfc(e,\cdot):\supp q(e) \to \{\text{regular connectors of $\calF$ on }e\}$$ such that the following hold for every edge $e\in K^1$:
\begin{enumerate}[label=(R\arabic*)]
    \item (symmetry)\label{condition: symmetry} $ \bfc(- e,-f)= \bfc( e,f)$  for all $f\in \supp q( e)$;
    \item ($G$-equivariance)\label{condition: G-equivariance} $g . \bfc( e, f) =  \bfc(g.  e, g.f)$ for all  $g\in G$ and $f\in \supp q( e)$;
    \item (quasi-ordered) \label{condition: quasi-ordered}  For all $f,f'\in \supp q( e)$ $$d(\Phi(e_-), f) < d(\Phi(e_-),f')-\adelta \implies \bfc( e, f)<\bfc( e,f')\text{ on $e$}$$ where we think of $e$ as an interval ordered from $e_-$ to $e_+$ (see \cref{fig:regular connectors} for an illustration);
    \item \label{condition: weight is coefficient} For all $f\in \supp q(e)$ the weight $\bfw(c(e,f))$ is the coefficient of $f$ in $q(e)$;
\end{enumerate}
    and for every 2-simplex $\Delta\in K^2$ the following holds:
    \begin{enumerate}[label=(R\arabic*)]\setcounter{enumi}{4}
    \item \label{condition: regular segments} two regular connectors $\bfc,\bfc'$ on the boundary of $|\Delta|$ are connected by a regular segment if and only if we can write $\bfc=\bfc(e,f),\bfc'=\bfc(e',-f)$ such that $e,e',e''$ are the edges of $\partial\Delta$, oriented such that $\partial\Delta=e+e'+e''$,  the edge $f$ of $X$ belongs to $\supp q(e)$, $-f$ belongs to $\supp q(e')$, and neither of $\pm f $ belongs to $ \supp q(e'')$. See \cref{example illustration} and the accompanying \cref{fig:regular and singular segments}.
\end{enumerate}
\end{definition}

\begin{figure}
    \centering
    \includegraphics{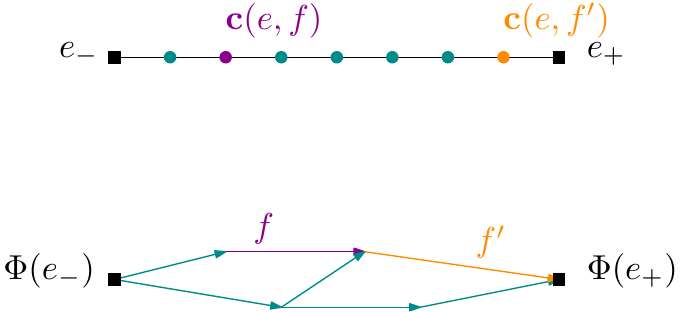}
    \caption{The regular connectors on the edge $e$, on top, correspond to edges in $\supp q(e)$, at the bottom. \ref{condition: quasi-ordered} says that if $f$ is (significantly) closer than $f'$ to $\Phi(e_-)$ then $\bfc(f,e)$ is closer than $\bfc(f',e)$ to $e_-$.}
    \label{fig:regular connectors}
\end{figure}

\begin{example}\label{example illustration}
To illustrate \ref{condition: regular segments} consider the 2-simplex $\Delta$, with boundary edges oriented so that $e+e'+e''=\partial \Delta$. Consider four edges $f_1,f_2,f_3,f_4$ in $X$ and let the supports of $q(e),q(e'),q(e'')$ be such that
\begin{align*}
        \supp q(e) \cap \{\pm f_1,\dots,\pm f_4\} &= \{f_1,f_2,f_3\}\\
        \supp q(e') \cap \{\pm f_1,\dots,\pm f_4\} &= \{-f_1,f_2\}\\
        \supp q(e'') \cap \{\pm f_1,\dots,\pm f_4\} &= \{-f_2,f_3,f_4\}
\end{align*}
As depicted also in \cref{fig:regular and singular segments}.
The edge $f_1$ belongs to $\supp q(e)$, $-f$ belongs to $\supp q(e')$ and the edges $\pm f_1$ do not belong to $\supp q(e'')$, hence $\bfc(e,f_1)$ and $\bfc(e',-f_1)$ are joined by a regular segment. The connectors corresponding to the edges $f_2,f_3,f_4$ do not satisfy this condition: each of the three supports contains either $f_2$ or $-f_2$; $f_3$ belongs to both $\supp q(e)$ and $\supp q(e'')$; $f_4$ (in either orientation) belongs to only one of the supports, namely $\supp q(e'')$. Hence the connectors corresponding to these edges are connected by a short singular segment to a singular connector.
\begin{figure}
    \centering
    \includegraphics{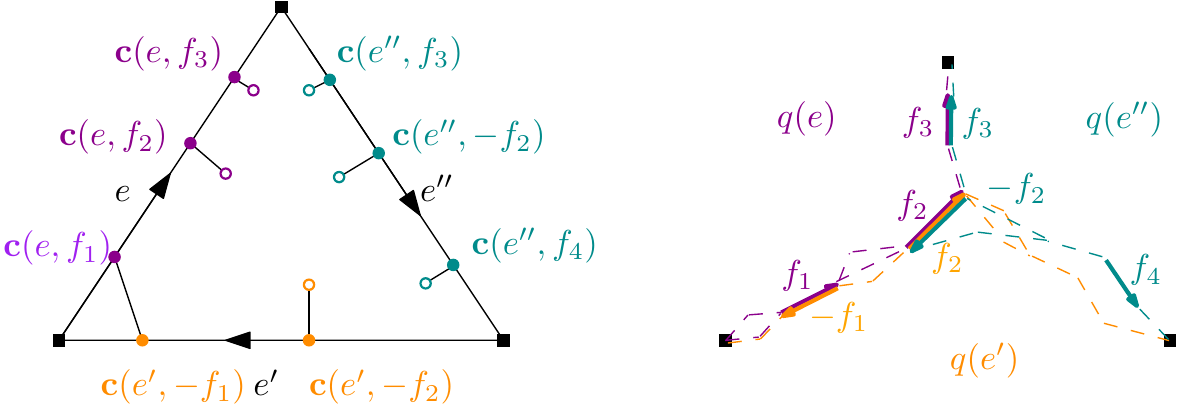}
    \caption{On the left, we see the connectors and the segments corresponding to the edges $|f_1|,|f_2|,|f_3|,|f_4|$ which appear in (some orientation in some of) the supports of $q(e), q(e'), q(e'')$ which are depicted on the right in three colors. 
    }
    \label{fig:regular and singular segments}
\end{figure}
\end{example}

\begin{lemma}
For every $G,K,\Phi,X,q$ as above, there exists a resolution $(\calF,\bfw)$ of $q$ to $K$ via $\Phi$.
\end{lemma}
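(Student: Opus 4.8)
The plan is to construct the resolution $(\calF,\bfw)$ by first building the pattern simplex-by-simplex in an equivariant way, then verifying the axioms. Here is how I would proceed.

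\textbf{Step 1: Place the connectors.} For each edge $e\in K^1$, I need to place $|\supp q(e)|$ regular connectors in the interior of $e$. By quasigeodesicity \ref{gsb: quasigeodesic} and \cref{boundedness of the bicombing}, $|\supp q(e)|$ is finite, so this is possible. To satisfy \ref{condition: quasi-ordered}, I would order the edges $f\in \supp q(e)$ by the value $d(\Phi(e_-),f)$ (breaking ties arbitrarily but, crucially, \emph{$G$-equivariantly and compatibly with the edge-inversion}) and place the connectors $\bfc(e,f)$ in the interior of $e$ in that order from $e_-$ to $e_+$. Since $d$ is $\Aut(X)$-invariant and the whole setup is $G$-equivariant, one can make the tie-breaking choice on a set of orbit representatives of oriented edges of $K$ (finitely many since the action is cocompact) and propagate it by the $G$-action and by the inversion $e\mapsto -e$, $f\mapsto -f$; this secures \ref{condition: symmetry} and \ref{condition: G-equivariance}. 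The weight function is forced: $\bfw(\bfc(e,f))$ is the coefficient of $f$ in $q(e)$, giving \ref{condition: weight is coefficient}; by \cref{boundedness of the bicombing} these weights are bounded by $\adelta$.

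\textbf{Step 2: Place the segments inside each 2-simplex.} Fix a 2-simplex $\Delta$ with $\partial\Delta = e+e'+e''$. For each unordered edge $|f|$ of $X$, look at which of the three chains $q(e),q(e'),q(e'')$ contain $\pm f$. Because $\partial\big(q(e)+q(e')+q(e'')\big)$ has $1$-norm at most $\adelta$ (bounded defect \ref{gsb: defect} applied to $\Phi(e_-),\Phi(e'_-),\Phi(e''_-)$, after checking orientations cancel since $e+e'+e''$ is a cycle) — actually more simply, $q(e)+q(e')+q(e'')$ is a chain whose boundary vanishes on the $\Phi$-images of the vertices — the generic situation is that $f$ appears in exactly two of the supports with opposite orientations; these two connectors get joined by a \emph{regular} straight segment inside $|\Delta|$. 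This is exactly the prescription \ref{condition: regular segments}. The exceptional connectors — those corresponding to $f$ appearing in only one support, or in all three, or with inconsistent signs — are each joined by a short \emph{singular} segment to a freshly created singular connector placed in the interior of $|\Delta|$ near the exceptional regular connector; assign weight $0$ to every singular connector. I must check (P4): each connector is the endpoint of exactly one segment in each 2-simplex containing it. A regular connector $\bfc(e,f)$ lying on edge $e$, for a 2-simplex $\Delta$ containing $e$: either $-f$ appears (with the right sign pattern) in another side of $\Delta$ — one regular segment — or it does not — one singular segment. Either way exactly one, so (P4) holds; (P1)–(P3) are immediate from the construction, using local finiteness of $X$ and finiteness of $\supp q(e)$ for (P3). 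The segments can be drawn as genuine straight segments meeting transversally after a small perturbation of the connector positions within each edge (keeping the order from Step 1), and singular segments can be taken short enough to meet no other segment, as required by (P2).

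\textbf{Step 3: Equivariance and the immersion structure.} The $G$-action permutes the 2-simplices and, since all choices in Steps 1–2 were made on orbit representatives and propagated by $G$, the resulting pattern $\calF$ with its weight function $\bfw$ is $G$-invariant, and the assignment $e\mapsto \bfc(e,\cdot)$ is $G$-equivariant and symmetric. Finally $\calF\looparrowright K$ is an immersion because within each simplex it is an embedding by construction. I expect the \textbf{main obstacle} to be Step 2: making the case analysis of which connectors get regular versus singular segments genuinely consistent across the three edges of $\Delta$ (so that ``$\bfc$ and $\bfc'$ joined by a regular segment'' is a well-defined \emph{symmetric} relation matching \ref{condition: regular segments} on the nose), and simultaneously ensuring the straight segments for distinct pairs $|f|$ can be realized disjointly-in-interior-except-for-transverse-crossings while singular segments stay isolated — this requires choosing the cyclic orders of connectors on $\partial\Delta$ and the interior positions with a little care, though no deep idea. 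The verification that the construction respects the orientation conventions in \ref{condition: regular segments} (the roles of $f$ vs $-f$ on $e$ vs $e'$) is the other place where sign bookkeeping must be done carefully.
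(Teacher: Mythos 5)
Your overall approach matches the paper's: pick $G$-orbit representatives, define the connectors on each orbit representative edge by a distance ordering, propagate by $G$-equivariance and edge-inversion, then wire up segments 2-simplex by 2-simplex according to the sign pattern of each $X$-edge in the three supports, with singular connectors mopping up the leftovers. Your Step 2 and Step 3, including the symmetry/well-definedness checks you flag, are essentially what the paper does (and those checks do go through; there is no hidden obstruction there).

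There is, however, a genuine gap in Step 1. You propose to order the connectors on $e$ by $d(\Phi(e_-),\cdot)$ and to handle the reversal $e\mapsto -e$ by "breaking ties \dots compatibly with the edge-inversion." But \ref{condition: symmetry} forces the order of connectors on $-e$ to be the exact reverse of the order on $e$, while \ref{condition: quasi-ordered} for $-e$ asks that this reversed order be compatible, up to the $\adelta$ slack, with $d(\Phi(e_+),\cdot)$. The two strict orderings (by distance to $\Phi(e_-)$ and by distance to $\Phi(e_+)$) are in general \emph{not} reverses of each other even after breaking ties: for $f,f'\in\supp q(e)$ near the same point of a geodesic $\geod{\Phi(e_-)}{\Phi(e_+)}$ but offset sideways, one can have both $d(\Phi(e_-),f)<d(\Phi(e_-),f')$ and $d(\Phi(e_+),f)<d(\Phi(e_+),f')$. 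So you cannot arrange both strict orderings simultaneously by clever tie-breaking. What saves the construction is precisely the $\adelta$ slack built into \ref{condition: quasi-ordered}: one fixes a \emph{strict} ordering on one chosen orientation $e_j$ of each orbit representative, \emph{defines} the connectors on $-e_j$ by \ref{condition: symmetry}, and then \emph{proves} that \ref{condition: quasi-ordered} holds for $-e_j$. This verification is not automatic; it uses the quasigeodesicity of $q$ (points of $\supp q(e)$ lie $\adelta$-close to a geodesic) together with hyperbolicity to compare distances from the two endpoints through nearby geodesic points, and it is carried out as a separate Claim in the paper's proof. Your write-up treats this as a matter of tie-breaking, which it is not; you should supply this argument.
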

\begin{proof}
    Let $v_1,\dots,v_l$, $\pm e_1,\dots,\pm e_m$, and $\pm \Delta_1,\dots,\pm \Delta_n$ be representatives for the $G$-orbits of vertices, edges and 2-simplices respectively.
    We begin by constructing the regular connectors of $\calF$ and the maps $ \bfc( e,\cdot)$. First, choose some injective map  $ \bfc( e_j,\cdot):\supp q( e_j) \to  e_j$ for each $1\le j \le m$ satisfying:
\begin{enumerate}[label=(R\arabic*')]
\setcounter{enumi}{2}
    \item  \label{condition: quasi-ordered'}If $f,f'\in \supp q( e_j)$ are such that $d(\Phi((e_j)_-), f) < d(\Phi((e_j)_-),f')$ then $\bfc(e_j, f)<\bfc(e_j,f')$ on $e_j$.
\end{enumerate}
Clearly \ref{condition: quasi-ordered'} implies \ref{condition: quasi-ordered} for the edge $e=e_j$.
Define $\bfc(- e_j,f) =  \bfc ( e_j,-f)$ for all $f\in \supp q(- e_j)$.

\begin{claim} \ref{condition: quasi-ordered} holds for the edge $e=-e_j$.
\end{claim}
\begin{proof} 
Denote by $u_-=(e_j)_-$ and $u_+=(e_j)_+$ the endpoints of $e_j$. Note that $u_+=(-e_j)_-$ and $u_-=(-e_j)_+$.
Let $f,f'$ be edges in the support of $q(-e_j)$. Assume that $\bfc(-e_j,f)>\bfc(-e_j,f')$ on $-e_j$. We want to show that $d(\Phi(u_+),f) \ge d(\Phi(u_+),f') - \adelta$.

We have $\bfc(e_j,-f)<\bfc(e_j,-f')$ on $e_j$. By \ref{condition: quasi-ordered'}, it follows that $d(\Phi(u_-),f) \le d(\Phi(u_-),f')$.
Let $\gamma = \geod{\Phi(u_-)}{\Phi(u_+)}$. By \ref{gsb: quasigeodesic}, there exist $y,y'\in\gamma$ such that $d(y,f),d(y',f')\le \adelta$. 
By the triangle inequality we get
$$ d(\Phi(u_-),y) \le d(\Phi(u_-),f) + \adelta \le d(\Phi(u_-),f') +\adelta \le d(\Phi(u_-),y') + \adelta$$ 
Since $y,y'$ are on the geodesic $\gamma$ between $\Phi(u_-),\Phi(u_+)$ we get 
$$ d(\Phi(u_+),y) \ge  d(\Phi(u_+),y') - \adelta$$ 
By the triangle inequality, we get
$$ d(\Phi(u_+),f) \ge d(\Phi(u_+),y) - \adelta \ge  d(\Phi(u_+),y') - \adelta \ge d(\Phi(u_+),f) - \adelta,$$
as desired.
\end{proof}

We have defined $ \bfc (\pm  e_j, \cdot)$ to satisfy \ref{condition: symmetry} and \ref{condition: quasi-ordered}. Define $\bfc(e,\cdot)$ for all the edges $e$ of $K$ by extending $G$-equivariantly the definition of $\bfc(\pm e_j,\cdot)$, so as to satisfy \ref{condition: G-equivariance}.

The regular connectors of $\calF$ are the points $$\{ \bfc( e, f)\;|\; e\in  K,\;f\in \supp q( e)\}.$$
Set the weight of the regular connector $ \bfc( e, f)$ to be the coefficient of $f$ in the 1-chain $q(e)$ to satisfy \ref{condition: weight is coefficient}.
In each 2-simplex $\Delta$, connect two regular connectors by a regular segment as instructed by \ref{condition: regular segments}. Note that in each 2-simplex, every regular connector is connected by a regular segment to at most one other regular connector.

It remains to define the singular connectors and segments. If a regular connector $\bfc=\bfc(e,f)$ in the 2-simplex $\Delta_k$ is not connected to another regular connector in $\Delta_k$, add a short segment connecting $\bfc(e,f)$ to a newly added singular connector $ \bfc ( \Delta_k, e,f)$, and set $\bfw(\bfc(\Delta_k,e,f))=0$.
By choosing the singular segments to be short enough, we may assume that they do not intersect any other segment.

Finally, extend this construction $G-$equivariantly to all 2-simplices of $K$. The obtained weighted singular pattern $(\calF,\bfw)$ is a resolution of $q$ to $K$ via $\Phi$.
\end{proof}

\begin{lemma} \label{lem: observations about tracks}
Let $(\calF,\bfw)$ be a resolution of $q$ to $K$ via $\Phi$, then:
\begin{enumerate}[label = (T\arabic*)]
    \item \label{lem: observations about tracks: embedded 2sided}Each track of $ \calF$ is embedded and meets each edge of $ K$ at most once.
    \item \label{lem: observations about tracks: correspondence to edges} Each track corresponds to a unique unoriented edge $|f|$ in $X$.
    \item \label{lem: observations about tracks: compact trivial stab}Each track is compact and its stabilizer is finite.
    \item \label{lem: observations about tracks: intersecting tracks}If two tracks $\bft,\bft'$ in $ \calF$ intersect then the corresponding edges $f,f'$ in $X$ satisfy $d(f,f')<\adelta$.
\end{enumerate}
\end{lemma}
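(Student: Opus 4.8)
All four items are governed by the unoriented edge of $X$ that a track ``carries'', so I would establish \ref{lem: observations about tracks: correspondence to edges} first and deduce the rest. Note that every track contains a regular connector, since by \cref{def: pattern} every singular segment has a regular connector as one endpoint (so a track made only of singular connectors cannot occur). Given a track $\bft$, pick a regular connector $\bfc=\bfc(e,f)$ of $\bft$ and set $\ell(\bft):=|f|$; by \ref{condition: symmetry} this does not depend on the orientation chosen for $e$. I claim $\ell$ is constant along $\bft$: two regular connectors of $\bft$ joined by a regular segment lie in a common $2$-simplex and, by \ref{condition: regular segments}, have the form $\bfc(e,f),\bfc(e',-f)$, hence share the label $|f|$; and a regular connector joined by a singular segment to a singular connector carries the same $|f|$ by the construction of the resolution. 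Since $\bft$ is connected, $\ell(\bft)$ is a well-defined unoriented edge of $X$, proving \ref{lem: observations about tracks: correspondence to edges}. Moreover $\ell$ is $G$-equivariant: by \ref{condition: G-equivariance}, $g.\bfc(e,f)=\bfc(ge,gf)$, so $\ell(g\bft)=g.\ell(\bft)$.

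\textbf{Item \ref{lem: observations about tracks: embedded 2sided}.} Fix $\bft$ with $\ell(\bft)=|f|$. The connectors of $\bft$ lying on an edge $e$ of $K$ are exactly the $\bfc(e,g)$ with $|g|=|f|$; since $q(e)\in C_1(X)$ obeys the convention that at most one of $g,-g$ lies in $\supp q(e)$, there is at most one such connector, so $\bft$ meets each edge of $K$ in at most one point. For embeddedness it is enough to check that $\bft$ is injective over each open simplex of $K$ (no connector meets a vertex, and the intersection with an open edge is at most a point). Inside a $2$-simplex $\Delta$ with boundary edges $e,e',e''$: if $\Delta$ contains a regular segment with label $|f|$, then by \ref{condition: regular segments} the third edge carries no label-$|f|$ connector, so that segment is the whole label-$|f|$ part of $\Delta$; otherwise each label-$|f|$ regular connector on $\partial\Delta$ is joined to its own singular connector by a short singular segment, and these short segments are pairwise disjoint by construction. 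In either case $\bft\cap\Delta$ is an embedded disjoint union of arcs, so $\bft$ is embedded.

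\textbf{Items \ref{lem: observations about tracks: compact trivial stab} and \ref{lem: observations about tracks: intersecting tracks}.} If $g\in\Stab_G(\bft)$ then $g.\ell(\bft)=\ell(\bft)$ by equivariance of $\ell$; since $G\actson X$ is free, the stabilizer of the edge $\ell(\bft)$ is trivial, so $\Stab_G(\bft)=\{1\}$. Hence the quotient map $\calF\to\calF/G$ is injective on $\bft$ (if $g p=q$ with $p,q\in\bft$, then $g\bft\cap\bft\ne\emptyset$, and tracks being connected components forces $g\bft=\bft$, so $g=1$ and $p=q$). Since $G\actson K$ is cocompact and each simplex carries finitely many connectors and segments, $\calF/G$ is a finite graph, so $\bft$ is finite, hence compact, with trivial (in particular finite) stabilizer. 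For \ref{lem: observations about tracks: intersecting tracks}, observe first that $\Phi$ sends every edge of $K$ to a path of length $\le\adelta$ in $X$: there are finitely many $G$-orbits of edges and $\Phi$ is $G$-equivariant with $G$ acting on $X$ isometrically; in particular the three vertices of any $2$-simplex have pairwise $\Phi$-distance $\le\adelta$. Now suppose $\bft,\bft'$ intersect. Since distinct tracks share no connector and, by \cref{def: pattern}, singular segments (hence singular connectors) meet no other segment, the intersection is a transverse crossing of regular segments $\bfs\subset\bft$, $\bfs'\subset\bft'$ inside some $2$-simplex $\Delta$. Write $f=\ell(\bft)$, $f'=\ell(\bft')$. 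An endpoint of $\bfs$ is some $\bfc(e_1,f_1)$ with $f_1\in\supp q(e_1)$ and $|f_1|=f$; by \ref{gsb: quasigeodesic}, $f_1$ lies within $\adelta$ of $\geod{\Phi((e_1)_-)}{\Phi((e_1)_+)}$, a geodesic of length $\le\adelta$, so $d(f,\Phi((e_1)_-))\le\adelta$. The same bound holds for $f'$ and a vertex of $\Delta$, and these two vertices are $\adelta$-close, so the triangle inequality gives $d(f,f')\le\adelta$.

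\textbf{Expected obstacle.} Items \ref{lem: observations about tracks: correspondence to edges}, \ref{lem: observations about tracks: compact trivial stab} and \ref{lem: observations about tracks: intersecting tracks} are formal once the label $\ell(\bft)$ is available and once one notices that $\Phi$ sends edges of $K$ to bounded-length paths of $X$. The one place needing care is item \ref{lem: observations about tracks: embedded 2sided}: one must pin down the local picture of the resolution in a single $2$-simplex and rule out self-crossings of a track; the possibility of a track re-entering the same $2$-simplex is precisely what is controlled by the ``at most one connector per edge'' bound coming from the sign convention on $q(e)$.
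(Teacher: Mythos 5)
Your treatment of \ref{lem: observations about tracks: correspondence to edges}, \ref{lem: observations about tracks: embedded 2sided}, and \ref{lem: observations about tracks: compact trivial stab} is correct, and broadly parallels what the paper does (the paper dispatches the first two as ``clear from the definition'' and proves compactness and trivial stabilizer via the $G$-equivariant, finite-to-one map from connectors to edges of $X$; your ``inject $\bft$ into the finite quotient graph'' phrasing is an equally valid way to package the compactness step).

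However, your argument for \ref{lem: observations about tracks: intersecting tracks} has a genuine gap. You assert that ``$\Phi$ sends every edge of $K$ to a path of length $\le \adelta$ in $X$'' on the grounds that there are finitely many $G$-orbits of edges. That reasoning only shows the displacement $\max_i d(\Phi((e_i)_-),\Phi((e_i)_+))$ is a finite number -- but that number depends on the particular choice of $\Phi$, $K$ and $G$, whereas $\adelta$ is reserved for quantities depending \emph{only} on the fixed graph $X$. The lemma must hold for an arbitrary $G$-equivariant $\Phi$, with a constant depending only on $X$; even when $\Phi$ is later chosen to minimize displacement (\cref{accessibility}), that minimum is still not $O_X(1)$. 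Once the geodesics $\geod{\Phi(u)}{\Phi(v)}$ can be arbitrarily long, your localization step (``$d(f,\Phi((e_1)_-))\le \adelta$'') fails, and with it the conclusion.

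The paper's proof of \ref{lem: observations about tracks: intersecting tracks} is carefully designed to avoid exactly this: it never bounds $d(\Phi(u),\Phi(v))$ itself. Instead it observes that the two crossing segments $\bfs,\bfs'$ meet the common edge $e$ in the reverse order from how they meet the other edge(s), and then combines the quasi-ordered property \ref{condition: quasi-ordered} (which only controls \emph{relative} distances along $e$) with quasigeodesicity \ref{gsb: quasigeodesic} to show that $f$ and $f'$ both sit within $\adelta$ of nearby points on the same geodesic. When the other endpoints of $\bfs,\bfs'$ lie on different edges, a Gromov-product argument pins both edges near the center of the geodesic triangle on $\Phi(u),\Phi(v),\Phi(v')$. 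That is the additional idea you are missing; a repaired version of your proof of \ref{lem: observations about tracks: intersecting tracks} needs some such mechanism rather than a bound on the $\Phi$-length of edges of $K$.
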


\begin{proof}
\cref{lem: observations about tracks: embedded 2sided,lem: observations about tracks: correspondence to edges} are clear from the definition.
To prove \cref{lem: observations about tracks: compact trivial stab} note that the 
map that sends a track to its corresponding unoriented edge in $X$ is $G$-equivariant. Since the action on $X$ is free, it follows that the stabilizer of each track is trivial. 
To prove that each track is compact, we first observe that since the action of $G$ on $K$ is cocompact, the action of $G$ on the set of connectors in $\calF$ has finitely many orbits.
The map that sends each connector of $\calF$ to its corresponding edge in $X$ is a $G$-equivariant map from a set with finitely many $G$-orbits to a set with a free $G$-action. It follows that this map is finite-to-one. Hence, each track has finitely many connectors, and so must be compact.

Finally, to prove \cref{lem: observations about tracks: intersecting tracks}, let $\bft,\bft'$ be two tracks and let $|f|,|f'|$ be their corresponding edges in $X$. Note that if two tracks intersect then both tracks meet a 2-simplex $\Delta$ of $ K$ in two intersecting regular segments $\bfs,\bfs'$. Let $e$ be an edge which meets both $\bft,\bft'$. Thus, up to reversing their orientation if necessary, we may assume that $f,f'\in\supp q( e)$. So $\bft,\bft'$ meet the edge $e$ in the connectors $ \bfc ( e,f), \bfc ( e, f')$ which without loss of generality we may assume that $ \bfc ( e,f)< \bfc ( e, f')$ on $ e$.
One of two things can happen:

\begin{figure}
    \centering
    \begin{subfigure}[b]{0.4\textwidth}
         \centering
         \includegraphics[width=\textwidth]{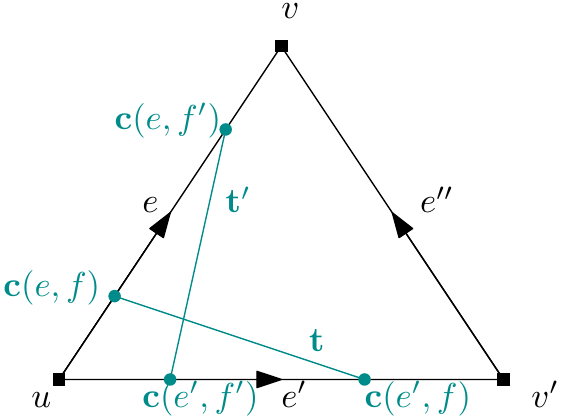}
         \caption{Case 1}
         \label{fig:crossing tracks 1}
     \end{subfigure}
     \hfill
     \begin{subfigure}[b]{0.4\textwidth}
         \centering
         \includegraphics[width=\textwidth]{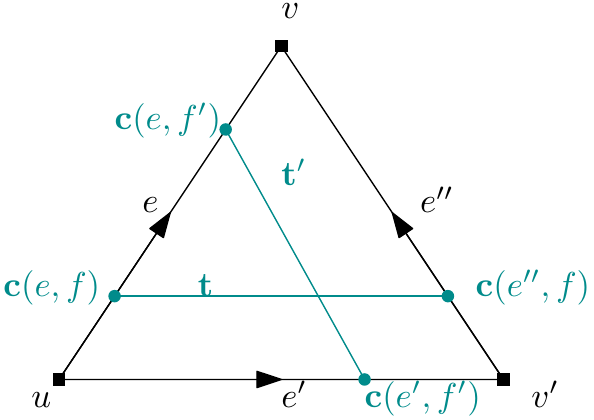}
         \caption{Case 2}
         \label{fig:crossing tracks 2}
     \end{subfigure}
    \caption{Crossing tracks}
    \label{fig:crossing tracks}
\end{figure}

\textbf{Case 1.} The other endpoints of the segments $\bfs,\bfs'$ are on the same edge, say $ e'$, as in \cref{fig:crossing tracks 1}. 
Let us assume that $ e'$ is oriented so that $\partial  \Delta =  e -  e' -  e''$ and $e_-=e'_- = u$. The other connectors of $\bfs,\bfs'$ are $ \bfc (e',f),  \bfc ( e',f')$.
Since the segments meet we must have $ \bfc (e',f) >  \bfc (e',f')$ on $ e'$. By \ref{condition: quasi-ordered} we have $| d(\Phi(u), f) - d(\Phi(u),f') | < \adelta$. Moreover, $f,f'\in q( e)$, so since $q$ is quasigeodesic \ref{gsb: quasigeodesic}, then they are $\adelta$-close to some points on a geodesic $\geod{\Phi(e_-)}{\Phi(e_+)}$ and by the triangle inequality it is easy to see that $d(f,f')\le \adelta$.

\textbf{Case 2.} The other endpoints of the segments  $\bfs,\bfs'$ are on different edges, as in \cref{fig:crossing tracks 2}. Assume again that $\partial  \Delta =  e -  e' -  e''$, and that $ e_-=e'_- = u$, and $e_+=v$,$e'_+=v'$. 
Let $\gamma = \geod{\Phi(u)}{\Phi(v)}$, $\gamma' = \geod{\Phi(u)}{\Phi(v')}$ and $\gamma'' = \geod{\Phi(v)}{\Phi(v')}$.
Since $ \bfc ( e,f)< \bfc ( e, f')$ on $ e$, we have $d(\Phi(u), f)\le d(\Phi (u),f') + \adelta$.
By \ref{gsb: quasigeodesic}, there are points $x_1,x_2\in\gamma$ which are at distance at most $\adelta$ from $f,f'$ respectively.
We thus have $$d(\Phi(u),x_1) - d(\Phi(u),x_2) \le \adelta.$$
On the other hand, since the segments $\bfs,\bfs'$ cross, the other connector of $\bfs'$ is $ \bfc( e',f')$ on $ e'$ and the connector of $\bfs$ is $ \bfc (  e'' , f)$ on $ e''$. 
By \ref{gsb: quasigeodesic}, there are points $x_1''\in\gamma''$ and $x_2'\in \gamma'$ which are at distance at most $\adelta$ from $f$ and $f'$ respectively.
If we denote the Gromov product $D=(\Phi(v),\Phi(v'))_{\Phi(u)}$ then since the points $d(x_1,x_1'')\le \adelta$ we must have $d(\Phi(u),x_1) \ge D - \adelta$, and similarly, $d(\Phi(u),x_2) \le D + \adelta$. In particular,
$$d(\Phi(u),x_2) - d(\Phi(u),x_1) \le \adelta.$$ 
All together we get $|d(\Phi(u),x_1) - d(\Phi(u),x_2)|\le \adelta$. It follows that $d(x_1,x_2)\le \adelta$ and finally by the triangle inequality that $d(f,f')\le \adelta$.
\end{proof}

Since a resolution $(\calF ,\bfw)$ of $q$ to $K$ via $\Phi$ is $G$-invariant, it defines a weighted pattern $(\bar \calF,\bfw)$ on $\bar K$, which we will call \emph{a resolution of $q$ to $\bar K$ via $\Phi$.}


\section{Bounding the total weight of a resolution}\label{sec: accessibility}

\begin{proposition}\label{accessibility}
Let $X$ be a one-ended hyperbolic graph with a globally stable bicombing $q$. For every proper and cocompact action $G\actson X$, and every simply connected 2-dimensional simplicial complex $K$ with a free cocompact simplicial action of $G \actson K$,
there exists a resolution $\bar\calF$ of $q$ to $\bar K = K/G$ that satisfies
\begin{equation}\label{eq: accessibility inequality}
\bfw(\bar \calF)\le \adelta \cdot \Vol(\bar K).
\end{equation}
\end{proposition}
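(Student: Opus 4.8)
The plan is to reduce the inequality \cref{eq: accessibility inequality} to a bound on the \emph{number} of tracks of a suitably chosen resolution, and then to prove that bound by a Dunwoody-type accessibility argument. Fix any $G$-equivariant map $\Phi\colon K^0\to X^0$ and let $(\calF,\bfw)$ be the resolution of $q$ to $K$ via $\Phi$ constructed above, with the induced resolution $(\bar\calF,\bfw)$ on $\bar K$. By \ref{condition: weight is coefficient} the weight of every regular connector of $\bar\calF$ is a coefficient of one of the $1$-chains $q(e)$, and singular connectors have weight $0$, so by \cref{boundedness of the bicombing} every connector has weight at most $\adelta$; hence $\bfw(\bft)=\max_{\bfc\in\bft}\bfw(\bfc)\le\adelta$ for every track $\bft$. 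Therefore $\bfw(\bar\calF)=\sum_{\bft\subseteq\bar\calF}\bfw(\bft)\le \adelta\cdot\#\{\text{tracks of }\bar\calF\}$, and it remains to choose $\Phi$ --- ``reduced'' in a suitable sense --- so that $\bar\calF$ has at most $\adelta\cdot\Vol(\bar K)$ tracks.

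To bound the number of tracks I would lift to $\calF$ on the simply connected complex $K$. Since $G$ acts freely on $K$, \cref{lem: observations about tracks: compact trivial stab} gives that $G$ acts freely on the set of connected components of $\calF$, so tracks of $\bar\calF$ correspond to $G$-orbits of components of $\calF$. Each component is an embedded compact subgraph of $K$ (\cref{lem: observations about tracks: embedded 2sided,lem: observations about tracks: compact trivial stab}) carrying, by \cref{lem: observations about tracks: correspondence to edges}, a $G$-equivariant label which is an unoriented edge of $X$, and by \cref{lem: observations about tracks: intersecting tracks} two components whose labels lie at distance $>\adelta$ in $X$ are disjoint in $K$. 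The idea is then to group the tracks into ``clusters'' by their $X$-labels --- since only $\le\adelta$ edges of $X$ lie within distance $\adelta$ of any given one --- so that after merging each cluster one is left with a coarsely \emph{disjoint} family of two-sided embedded tracks in $\bar K$; and then to invoke the accessibility of the hyperbolic group $G$, together with an Euler-characteristic bookkeeping on each $2$-simplex (using $\#\{\text{tracks}\}=\#\{\text{connectors}\}-\#\{\text{segments}\}+b_1(\bar\calF)$), to conclude that such a family has size $\ll_X\Vol(\bar K)$; tracks that bound a disk in $K$ are counted separately by a similar combinatorial estimate. (One may also first apply \cref{lem: removing singularities}, bounding $\bfw(\bar\calF)$ by the total weight of a perfect sub-pattern plus the defect $\df_\bfw(\bar\calF)$, so as to reduce to the zero-defect case, which is closest to Dunwoody's setting.)

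I expect this accessibility/counting step to be the main obstacle. In the settings of Dunwoody and Delzant the patterns are unweighted and their tracks are pairwise disjoint, whereas here tracks may cross and carry weights; making the accessibility bound quantitative with a constant depending only on $X$ --- not on $G$ or $K$ --- requires using \cref{lem: observations about tracks: intersecting tracks} carefully to pass from the crossing weighted pattern to a genuinely disjoint family of two-sided tracks, and then verifying that the number of essential such tracks that a $2$-complex $\bar K$ with $\pi_1(\bar K)=G$ can carry, together with the number of inessential (disk-bounding) ones, is at most $\adelta\cdot\Vol(\bar K)$.
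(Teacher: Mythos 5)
Your first step --- that $\bfw(\bar\calF)\le\adelta\cdot\#\{\text{tracks of }\bar\calF\}$, using \cref{boundedness of the bicombing} and \ref{condition: weight is coefficient} --- is correct. But the reduction to bounding the \emph{number} of tracks is a genuinely different (and, I believe, unworkable) route, and this is where the gap lies. The number of regular connectors on a single edge $\bar e_i$ of $\bar K$ equals $|\supp q(e_i)|$, which by \ref{gsb: quasigeodesic} and \cref{lem: lower bound on bicombing} is comparable to the displacement $d(\Phi((e_i)_-),\Phi((e_i)_+))$; since each track carries a distinct unoriented $X$-edge as its label (\cref{lem: observations about tracks: correspondence to edges}), the number of tracks is itself comparable to the number of distinct $X$-labels appearing, hence grows with the displacement. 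Minimizing the displacement of $\Phi$ does \emph{not} make the displacement $\ll_X\Vol(\bar K)$ --- a presentation complex $\bar K$ with few cells can force even the optimal $\Phi$ to stretch edges arbitrarily far in $X$ --- so there is no a priori bound $\#\{\text{tracks}\}\le\adelta\cdot\Vol(\bar K)$. The Proposition only claims a bound on total \emph{weight}, which is strictly weaker than bounding the number of tracks, and the paper's proof exploits exactly this distinction.

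The Dunwoody-style accessibility count you then invoke also does not go through: Dunwoody's bound controls the number of pairwise disjoint, pairwise non-parallel, \emph{essential} tracks, but the tracks here may be parallel, may cross, and may bound disks, and there is no bound at all on the number of disk-bounding or mutually parallel tracks in a fixed $\bar K$ (a single $2$-simplex can contain arbitrarily many nested disk-bounding tracks). The ``merge clusters by $X$-label'' idea is not well-defined --- being within distance $\adelta$ in $X$ is not an equivalence relation, so clusters can chain together into arbitrarily large groups --- and the Euler-characteristic identity $\#\text{tracks}=\#\text{connectors}-\#\text{segments}+b_1(\bar\calF)$ has both terms on the order of the displacement, so it yields nothing. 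What the paper's proof actually does, after the reduction you correctly anticipate via \cref{lem: removing singularities} to a perfect sub-pattern $(\bar\calF',\bfw')$ and the defect bound $\df(\bar\calF)\le\adelta n$, is this: it builds a competitor map $\Psi$ by pushing each vertex $v$ to the $X$-edge labelling the \emph{outermost} track of $\calF'$ enclosing $v$, and proves (\cref{claim inequality per edge}) that $\Psi$ decreases displacement on each edge $e_i$ by at least $\bfw'(\calF'_i)/\adelta-\adelta$. Minimality of $\Phi$'s displacement then forces $\sum_i\bfw'(\calF'_i)\le\adelta m$, hence $\bfw'(\bar\calF')\le\adelta m$ --- a direct bound on total weight that never needs the number of tracks to be controlled. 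This ``displacement-reducing fold'' is the essential mechanism your proposal is missing.
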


\begin{proof}
As before, let $v_1,\dots,v_l,$ $e_1,\dots,e_m,$ and $\Delta_1,\dots,\Delta_n$ be the $G$-orbit representatives of vertices, edges and 2-simplices of $K$.
By definition, $\Vol(\bar K) = l+m+n$.
Assume that a $G$-equivariant map $\Phi:K^0 \to X^0$ is chosen so that the displacement $$\sum_{i=1}^m d(\Phi(( e_i)_-),\Phi(( e_i)_+))$$ is minimal among all $G$-equivariant maps.

Denote by $( \calF,\bfw)$ and $(\bar \calF,\bfw)$ the resolutions of $q$ to $K$ and $\bar K$ via $\Phi$.
By Lemma \ref{lem: removing singularities}, there exists a perfect weighted pattern $(\bar \calF',\bfw')$ such that $\bar \calF' \subseteq \bar \calF$, $\bfw'\le \bfw$ and 
\begin{equation}\label{eq: singular regular defect inequality}
    \bfw(\bar \calF) \le \df(\bar \calF) + \bfw'(\bar\calF')
\end{equation}
In Claims \ref{claim: inequality for defect} and \ref{claim: inequality for regular} we will bound each of $\df(\bar\calF)$ and $\bfw'(\bar\calF')$.

\begin{claim}\label{claim: inequality for defect}
$\df(\bar\calF) \le \adelta\cdot n$.
\end{claim}

\begin{proof}
Let $\Delta$ be a 2-simplex of $K$ and let $v,v',v''$ be its vertices and $ e,  e', e''$ its edges (and $\partial \Delta = e+e'+e''$).
Consider the edges in $\supp q(e) \cup \supp q(e') \cup \supp q(e'')$.
We partition them into two classes $\calA$ and $\calB$. The set $\calA$ consists of all edges $f$ such that each of the supports $\supp q( e),\supp q( e'),\supp q( e'')$ contains either $f$ or $-f$. For example, the edge $f_2$ in \cref{fig:regular and singular segments} is such. The set $\calB$ is the complement of $\calA$.

By \ref{gsb: quasigeodesic}, an edge $f\in\cal A$ is at distance $\adelta$ from the sides of the geodesic triangle spanned by $\Phi(v),\Phi(v'),\Phi(v'')$, and hence at distance $\adelta$ from the center of the triangle.
It follows that the number of edges in $\calA$ is at most $\adelta$.
To each $f\in\calA$, we associate three singular segments of $\calF$ in $\Delta$. 
By \cref{boundedness of the bicombing} the defect of each of the singular segments is at most $\adelta$.
Therefore, the sum of the defects of all the singular segments corresponding to the edges in $\calA$ is $\adelta$.

As for edges $f\in \calB$, there are three cases to consider (corresponding to the three edges $f_1,f_4,f_3$ in  \cref{example illustration} respectively):

    \textbf{Case 1.} If $f$ is in one of the supports, and $-f$ is in another support. Say, $f\in \supp q(e)$ and $-f\in \supp q(e')$. By \ref{condition: regular segments}, the two connectors $\bfc(e,f),\bfc(e',-f)$ are connected by a regular segment $\bfs$ whose defect $\df(\bfs)=|\bfw(\bfc(e,f))-\bfw(\bfc(e',-f))|$ is equal to the absolute value of the coefficient of $f$ in the sum $q(e)+q(e')+q(e'')$.
    
    \textbf{Case 2.} If $f$ is in exactly one of the three supports, say $f\in \supp q(e)$, and $-f$ is in none of them. Then, there is a singular segment $\bfs$ connecting $\bfc(e,f)$ to a singular connector. The defect $\df(\bfs) = |\bfw(\bfc(e,f))|$ is equal to the absolute value of the coefficient of $f$ in the sum $q(e)+q(e')+q(e'')$.
    
    \textbf{Case 3.} If $f$ is in two out of  the three supports, say $f\in \supp q(e)$ and $f\in \supp q(e')$. In this case, $f$ gives rise to two singular segments $\bfs,\bfs'$ connecting the regular connectors $\bfc(e,f),\bfc(e',f)$ to the singular connectors $\bfc(\Delta,e,f),\bfc(\Delta,e',f)$ respectively. The sum of their defects $\df(\bfs)+\df(\bfs')=|\bfw(\bfc(e,f))|+|\bfw(\bfc(e',f))|$ is equal to the absolute value of the coefficient of $f$ in the sum $q(e)+q(e')+q(e'')$. 

In all three cases the sum of the defects of the segments corresponding to $f\in \calB$ in $\Delta$ is the absolute value of the coefficient of $f$ in the sum $q(e)+q(e')+q(e'')$.
Therefore, the sum of the defects of all segments in $\Delta$ corresponding to edges in $\calB$ is bounded above by $\|q(e)+q(e')+q(e'')\|_1$ which by \ref{gsb: defect} is bounded by $\adelta$.

Thus the sum of the defects of all the segments of $\calF$ in $\Delta$ is bounded by $\adelta$. It follows that the sum of the defects of the segments of $\bar \calF$ in a 2-simplex $\bar \Delta$ of $\bar K$ is bounded by $\adelta$. 
Since there are $n$ 2-simplices in $\bar K$ we get that the total defect $\df(\bar\calF)$, which is the sum of the defects of all the segments in $\bar\calF$, is bounded by $$\df(\bar \calF)=\sum_{\bfs\subseteq \calF} \df(\bfs) \le \adelta \cdot n.$$
\end{proof}

Denote by $\calF'$ the pattern on $ K$ obtained by considering all lifts of the pattern $\bar \calF'$.
Every track $ \bft$ of $ \calF'$ is compact, embedded and locally 2-sided. Since $K$ is simply-connected, each track $\bft$ of $\calF'$ separates $K$ into two components. Since $G$ is one-ended, $K$ is one-ended, and it follows that one of the components is bounded and the other is unbounded. We denote by $\bft_+$ the bounded component and $ \bft_-$ the unbounded component. 

If a vertex $v_i$ is contained in $\bft_+$ for some track $\bft$ of $\calF'$, let $\bft(v_i)$ be an outermost track such that $v_i\in \bft(v_i)_+$, that is, such that there is no track $\bft'$ in $\calF'$ such that $ \bft( v_i)_+ \subset \bft'_+$. 
Otherwise, for ease of notation, set $\bft(v_i)=v_i$.
Extend the map $\bft(\cdot):K^0\to \{\text{tracks in $\calF'$}\}\cup K^0$ to all vertices $v$ of $ K$  $G$-equivariantly.
Set $y_i = \Phi(v_i)$ if $\bft(v_i)=v_i$, and otherwise set $y_i = (f_i)_+$ to be an endpoint of the edge $f_i$ corresponding $ \bft( v_i)$. Let $\Psi:K^0 \to X^0$ be the $G$-equivariant map defined by $\Psi(g.v_i)=g.y_i$. By $G$-equivariance, $\Psi(v)$ is either $\Phi(v)$ or an endpoint of the edge corresponding to $\bft(v)$ depending on whether $\bft(v)=v$ or $\bft$ is a track.

Finally, let $ \calF'_i$ be the collection of tracks meeting the edge $ e_i$ in $\calF'$.

\begin{claim}\label{claim inequality per edge}
For all $1\le i\le m$,
\begin{equation}\label{eq: inequality per edge}
    d(\Psi ((e_i )_-),\Psi ((e_i )_+)) \le d(\Phi ((e_i )_-),\Phi ((e_i )_+)) - \bfw'( \calF'_i)/\adelta + \adelta.
\end{equation} 
\end{claim}

\begin{proof}
Let $1\le i \le m$, denote $ e= e_i$. Let its endpoints be $ v= e_-,  v'= e_+$, and let their images under $\Phi$ be $x,x'$, and under $\Psi$ be $y,y'$ respectively.
Thus we want to show $$d(y,y')\le d(x,x') - \bfw'( \calF'_i)/\adelta + \adelta.$$

Consider $ \bft =  \bft ( v),  \bft' =  \bft( v')$  for the two endpoints $v,v'$ of $ e$. We divide into cases:

\textbf{Case 1.} $\bft=v,\bft'=v'$. 

By the choice of the map $\Psi$ we get in this case $y=x,y'=x'$.  It also follows that no track $\bft$ of $\calF'$ can meet $e$, as otherwise $v$ or $v'$ would be contained in $\bft_+$. Thus, $\bfw'(\calF'_i)=0$, and  the desired inequality follows.

\textbf{Case 2.} $\bft\ne v,\bft'\ne v'$. 

Let $f,f'$ be the edges corresponding to $\bft,\bft'$ respectively. Recall that $y,y'$ are endpoints of $f,f'$.
We divide into two subcases depending on whether $\bft,  \bft'$ intersect.

\textbf{Case 2.1.} $ \bft \cap  \bft' \ne \emptyset$.

Since the tracks meet, by \cref{lem: observations about tracks: intersecting tracks}, their corresponding edges satisfy $d(f,f')\le \adelta$. Thus, $$d(y,y') \le \adelta.$$

 Since $(\calF',\bfw')$ is a perfect weighted pattern, $\bfw'(\bft) = \bfw'(\bfc)$ for any connector $\bfc$ on the track $\bft$ in $\calF'$. Moreover, we know $\bfw'(\bfc) \le \bfw(\bfc)$ for every connector $\bfc$ of $\calF'$. Summing over the connectors $\bfc$ of the singular patterns $\calF',\calF$ on the edge $e_i$ we get
 $$\bfw'(\calF'_i) = \sum_{\bfc \in e_i} \bfw'(\bfc) \le \sum_{\bfc \in  e_i} \bfw(\bfc) = \|q( e)\|_1\le \adelta \cdot d(x,x')$$ where the last inequality follows from \ref{gsb: quasigeodesic}.
Hence,
$$ 0\le d(x,x') - \bfw'(\calF'_i)/\adelta.$$
Combining the two inequalities we get the desired inequality
$$ d(y,y') \le \adelta \le d(x,x') - \bfw'(\calF'_i)/\adelta + \adelta.$$

\textbf{Case 2.2.} $ \bft \cap  \bft' = \emptyset$.

In this case, $ \bft$ and $\bft'$ intersect $ e$ as otherwise both $ \bft_+, \bft'_+$ contain the same endpoint of $ e$, however since they do not intersect then either $ \bft_+ \subseteq  \bft'_+$ or $ \bft'_+ \subseteq  \bft_+$ in contradiction to the assumption that $ \bft, \bft'$ are outermost.
Let $ \bfc, \bfc'$ be the connectors in which $ \bft, \bft'$ meet $ e_i$. Since $ \bft, \bft'$ do not intersect we must have $ \bfc<  \bfc'$ on $ e_i$.

The connectors $\bfc,\bfc'$ separate the edge $e_i$ to three intervals, $I_1 = [ v, \bfc],I_2 = ( \bfc, \bfc')$ and $I_3 = [ \bfc', v']$, and accordingly partition the set $ \calF_i'$ into three subsets $ \calF_{i,1}', \calF_{i,2}', \calF_{i,3}'$ of tracks which meet the intervals $I_1,I_2,I_3$ respectively.

Bounding $\bfw'( \calF_{i,2}')$: If $ \bft''$ has a connector on $I_2$ then $\bft''_+$ contains one of the endpoints of $e_i$. Since $ \bft,  \bft'$ are outermost, $ \bft''$ must intersect $ \bft$ or $ \bft'$.
By \ref{lem: observations about tracks: intersecting tracks} of \cref{lem: observations about tracks}, the edge that corresponds to $ \bft''$ is at distance at most $\adelta$ from one of the edges $f,f'$ corresponding to $\bft,\bft'$.
The number of such edges is bounded by $\adelta$.
This implies that there are at most $\adelta$ tracks that cross $I_2$.
By \cref{boundedness of the bicombing}, the weight of each such track is at most $\adelta$ and so
\begin{equation}\label{eq: bound for Fi2}
    \bfw'( \calF'_{i,2})\le \adelta.
\end{equation}

Bounding $\bfw'( \calF_{i,1}')$ and $\bfw'( \calF_{i,3}')$: Let $\gamma = \geod{x}{x'}$. By \ref{gsb: quasigeodesic}, there are points $w,w'$ on $\gamma$ at distance at most $\adelta$ from $f,f'$ respectively. By \ref{condition: quasi-ordered}, the edge $f''$ corresponding to a track $\bft''$ in $\calF_{i,1}'$ satisfies $d(x,f'')\le d(x,f)+\adelta$. By \ref{gsb: quasigeodesic}, and some application of the triangle inequality, shows that $f''$ is contained in the $\adelta$ neighborhood of the subsegment $\geod{x}{w}\subseteq \gamma$. In particular there are at most $\adelta d(x,w)$ such edges.
By \cref{boundedness of the bicombing} the weight of each such track is at most $\adelta$ and so
\begin{equation}\label{eq: bound for Fi1}
    \bfw'( \calF_{i,1}')\le \adelta d(x,w).
\end{equation}
Similarly,
\begin{equation}\label{eq: bound for Fi3}
    \bfw'( \calF_{i,3}')\le \adelta d(x',w').
\end{equation}

Since $\bfw'(\calF_i') = \bfw'( \calF_{i,1}') + \bfw'(\calF_{i,2}') +  \bfw'( \calF_{i,3}')$ we get
\begin{align*}
    d(y,y') &\le \adelta + d(w,w')\\
    &= \adelta + d(x,x') - d(x,w) - d(x',w')\\
    &\le d(x,x') - (\bfw'( \calF_{i,1}') + \bfw'( \calF_{i,3}'))/\adelta + \adelta \text{\quad\quad By \eqref{eq: bound for Fi1}\eqref{eq: bound for Fi3}}\\
    &\le d(x,x') - (\bfw'( \calF_{i,1}') + \bfw'(\calF_{i,2}') +  \bfw'( \calF_{i,3}'))/\adelta +  \adelta + \bfw'(\calF_{i,2}')/\adelta\\
    &\le d(x,x') - \bfw'(\calF_i')/\adelta + \adelta \text{\quad\quad By \eqref{eq: bound for Fi2}}
\end{align*}
Thus, also in Case 2 we get the desired inequality.

\textbf{Case 3.} $\bft\ne v,\bft'= v'$. This case is done similarly to Case 2.2 above. The track $\bft$ must meet the edge $e$, as otherwise $v'\in \bft_+$. Let $\bfc$ be the connector of $\bft$ on $e$.  Partition $e$ into two intervals $I_1=[v,\bfc]$ and $I_2=(\bfc,v']$. The rest of the argument is almost identical, and so we skip it.
\end{proof}

\begin{claim}\label{claim: inequality for regular}
$\bfw'(\bar\calF') \le \adelta m$ where $m$ is the number of edges of $\bar K$.
\end{claim}

\begin{proof}
By \cref{claim inequality per edge} and by the assumption of minimal displacement of $\Phi$ we get 
\begin{align*}
    \sum_{i=1}^m d(\Phi(( e_i)_-,\Phi(( e_i)_+) & \le \sum_{i=1}^m d(\Psi(( e_i)_-,\Psi(( e_i)_+) \\
    &\le \sum_{i=1}^m \left(d(\Phi(( e_i)_-,\Phi(( e_i)_+) - \bfw'( \calF'_i)/\adelta + \adelta\right)\\
    &\le \sum_{i=1}^m d(\Phi(( e_i)_-,\Phi(( e_i)_+) - \sum _{i=1}^{m} \bfw'( \calF'_i)/\adelta + \adelta\cdot m.
\end{align*} 
Subtracting $\sum_{i=1}^m d(\Phi(( e_i)_-,\Phi(( e_i)_+) - \sum _{i=1}^{m} \bfw'( \calF'_i)/\adelta$ from both sides we get,
$$ \sum _{i=1}^{m} \bfw'( \calF'_i)/\adelta \le  \adelta \cdot m$$
which implies
\[\bfw'(\bar\calF') \le \sum_{i=1}^m\bfw'( \calF'_i) \le \adelta\cdot m.\]
\end{proof}


By combining \cref{eq: singular regular defect inequality} and Claims \ref{claim: inequality for defect} and \ref{claim: inequality for regular} we get the desired inequality \cref{eq: accessibility inequality}
\[\bfw(\bar\calF)\le \df(\bar\calF)+\bfw'(\bar\calF') \le \adelta \cdot m + \adelta \cdot n \le \adelta \cdot \Vol (\bar K).\]
\end{proof}

\begin{remark}
    Since $G$ is one-ended, one can reduce to the case where each edge of $\bar K$ is contained in a 2-simplex. Thus, $m\le 3n$ where $m,n$ are the number of edges and 2-simplices of $\bar K$ respectively. So, in fact, $\bfw(\bar \calF) \le \adelta n$.
\end{remark}

\section{Uniform quasi-surjectivity}\label{sec: quasisurjectivity}

The next proposition shows that continuous quasi-isometries between hyperbolic groups are, in a sense, uniformly quasi-surjective. For this purpose, we will make use of the boundary $\partial G$ of a hyperbolic group $G$. The boundary $\partial G$ is a compact metrizable space which has a finite topological (covering) dimension, $\dim(\partial G)<\infty$ (see \cite{ghys1990groupes}).

\begin{proposition}\label{prop: uniform quasi-onto}
Let $Y$ be a hyperbolic simplicial complex with a proper cocompact group action. Assume that $Y$ is $m$-dimensional and $(m-1)$-connected where $m=\dim(\partial Y)+1$.
Then, there exists $\xi=\xi(Y)$ such that for any group $G$ that acts on $Y$ properly and cocompactly, any $m$-dimensional, $(m-1)$-connected simplicial complex $L$ on which $G$ acts properly and cocompactly, and any continuous $G$-equivariant quasi-isometry $\Phi:L \to Y$ we have $N_{\xi}(\Phi(L))=Y$.
\end{proposition}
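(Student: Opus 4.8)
The plan is to prove the contrapositive using the top-degree compactly-supported cohomology of $Y$ ``at infinity,'' in the style of Bestvina--Mess \cite{bestvina1991boundary}. Fix a suitable coefficient ring $R$ (e.g.\ $\bbZ$, or $\bbZ/p$ if needed) and suppress it from the notation when convenient.

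\textbf{Coarse-topological preliminaries.} A continuous $G$-equivariant quasi-isometry $\Phi\colon L\to Y$ is proper (preimages of bounded sets are bounded, since $L$ and $Y$ are locally finite), it induces a homeomorphism $\partial\Phi\colon\partial L\to\partial Y$ of Gromov boundaries, and it extends to a continuous map $\bar\Phi\colon \bar L=L\cup\partial L\to\bar Y=Y\cup\partial Y$ of the Bestvina--Mess $\mathcal Z$-compactifications restricting to $\partial\Phi$. Since $Y$ is $m$-dimensional and $(m-1)$-connected with $m=\dim\partial Y+1\ge 2$, the long exact sequence of the pair $(\bar Y,\partial Y)$ together with $H^{m-1}(Y;R)=0$ gives an injection $\partial_Y\colon\check H^{m-1}(\partial Y;R)\hookrightarrow H^m_c(Y;R)$, whose source is nonzero because $\dim\partial Y=m-1$; the same holds for $L$, and by naturality of the pair sequence under $\bar\Phi$ these fit into a commuting square with $\Phi^*$ and $(\partial\Phi)^*$ in which the map on Čech cohomology of the boundaries is an isomorphism. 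Consequently $\Phi^*$ is injective on the subgroup $\im\partial_Y\subseteq H^m_c(Y;R)$. (For $m=1$ the statement is the classical fact that quasi-isometries coarsely preserve end structure, and in any case $m=1$ does not occur for one-ended $Y$.)

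\textbf{Main cohomological input (the crux).} The heart of the proof is a uniform local non-vanishing statement, essentially the role of \cref{lem: Cech cohomology}: there is a constant $D=D(Y)$ such that for every vertex $y_0\in Y$ there is a class $\beta\in H^m(Y,\,Y\setminus B_D(y_0);R)$ whose image in $H^m_c(Y;R)$ is nonzero and lies in $\im\partial_Y$. To establish this one uses the $\mathcal Z$-compactification to identify $H^m(Y,Y\setminus B_D(y_0);R)$ and the relevant summand of $H^m_c(Y;R)$ with Čech cohomology of $\partial Y$ relative to the ``shadow at infinity'' of the ball $B_D(y_0)$, and then invokes $\dim\partial Y=m-1$ (Alexandroff's characterisation of covering dimension) to exhibit a nonzero $(m-1)$-dimensional Čech class that survives once $D$ is large enough; uniformity of $D$ in $y_0$ comes from cocompactness of the fixed action on $Y$, which leaves only finitely many cases to check. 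I expect this step to be the main obstacle. It is also why a naive limiting/compactness argument will not work: the quasi-isometry constants of $\Phi$ are not controlled by $Y$, so the argument must be intrinsically local and must use the $(m-1)$-connectedness of $L$ in an essential way.

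\textbf{Conclusion.} Set $\xi=D$ and suppose $N_\xi(\Phi(L))\ne Y$; choose $y_0$ with $d(y_0,\Phi(L))>\xi$, so that $\Phi$ maps $L$ into $Y\setminus B_\xi(y_0)$ and hence gives a map of pairs $(L,L)\to(Y,Y\setminus B_\xi(y_0))$. Taking $\beta\in H^m(Y,Y\setminus B_\xi(y_0);R)$ as in the core lemma, we get $\Phi^*\beta\in H^m(L,L;R)=0$, so by naturality of the canonical map to $H^m_c$ its image $\alpha\in H^m_c(Y;R)$ satisfies $\Phi^*\alpha=0$; but $\alpha$ is a nonzero element of $\im\partial_Y$, contradicting the injectivity of $\Phi^*$ on $\im\partial_Y$ from the first step. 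Therefore $N_\xi(\Phi(L))=Y$ with $\xi=D(Y)$, which proves the proposition.
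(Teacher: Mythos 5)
Your proposal follows the same Bestvina--Mess-style route as the paper: obtain an embedding $\cechH^{m-1}(\partial Y) \hookrightarrow H^m_c(Y)$ from the long exact sequence of the pair $(\bar Y, \partial Y)$, note the source is nonzero because $\dim\partial Y = m-1$, and use naturality under $\bar\Phi$ to derive a contradiction if $\Phi(L)$ misses a ball. But there is a real gap in your justification of the embedding: you cite $H^{m-1}(Y;R)=0$, whereas the long exact sequence of $(\bar Y,\partial Y)$ needs the vanishing of $\cechH^{m-1}(\bar Y;R)$. These would coincide if $\partial Y$ were a $\mathcal Z$-set in $\bar Y$ (so that $\bar Y$ deformation-retracts onto $Y$), but Bestvina--Mess establish the $\mathcal Z$-set property for the contractible Rips complex, not for an arbitrary $(m-1)$-connected $m$-dimensional $Y$; this is precisely the nontrivial content that the paper isolates as \cref{lem: Cech cohomology}. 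The paper's argument there is: build a proper retraction $r\colon R^{(m)}\to Y$ from the $m$-skeleton of a contractible Rips complex $R\supseteq Y$ (possible because $Y$ is $(m-1)$-connected), extend it continuously to $R^{(m)}\cup\partial R\to Y\cup\partial Y$, embed $\cechH^{m-1}(Y\cup\partial Y)$ into $\cechH^{m-1}(R^{(m)}\cup\partial R)$, and kill the latter via the long exact sequence of the pair $(R\cup\partial R, R^{(m)}\cup\partial R)$, using that $R\cup\partial R$ is contractible (where the $\mathcal Z$-set argument does apply) and that $\cechH^m(R\cup\partial R, R^{(m)}\cup\partial R) = H^m_c(R-R^{(m)})=0$ since $R-R^{(m)}$ has no cells in dimension $\le m$.

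On the other point, your ``main cohomological input'' makes the uniformity in $y_0$ heavier than it needs to be. Once a single nonzero $a\in\cechH^{m-1}(\partial Y)$ is chosen, its image in $H^m_c(Y)$ is by definition of compactly supported cohomology represented in $H^m(Y, Y\setminus B_\xi(x_0))$ for some fixed $\xi$ and a fixed base point $x_0$; the passage from this one ball to an arbitrary $y_0$ is the elementary observation, made at the outset of the paper's proof, that $\Aut(Y)\actson Y$ is cocompact, so one may post-compose $\Phi$ by an automorphism moving $y_0$ close to $x_0$. No ``shadow at infinity'' or Alexandroff-style analysis is required.
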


\begin{proof}
Since  $\Aut(Y)$ acts cocompactly on $Y$, it suffices to show that the intersection $\Phi(L) \cap B_{\xi}\ne \emptyset$ where $B_{\xi}$ is the ball of radius $\xi$ around some fixed point $x_0$ in $Y$.

The orbit map of the action $G\actson Y$ is a quasi-isometry which induces a boundary homeomorphism $\partial\Phi : \partial G \to \partial Y$.

By Bestvina-Mess \cite{bestvina1991boundary} we have \[m-1=\dim\partial Y = \max\{k \;|\; \cechH ^k(\partial Y) \ne 0\}\] where $\cechH ^k$ is the reduced  $k$-th \v{C}ech cohomology with $\bbZ$ coefficients (see \cite{walsh1981dimension}).

Consider the long exact sequence for the pair $(Y\cup \partial Y, \partial Y)$ in \v{C}ech cohomology
$$\cechH^{m-1}(Y\cup\partial Y) \to \cechH^{m-1} (\partial Y) \to \cechH^m(Y\cup\partial Y,\partial Y).$$
 Note that $\cechH^m(Y\cup\partial Y,\partial Y)\simeq H_c^m(Y)$, where $H_c^m$ is the $m$-th compactly supported cohomology with $\bbZ$ coefficients. In \cref{lem: Cech cohomology} we will prove $\cechH^{m-1}(Y\cup \partial Y)=0$.  
By the exact sequence we get the following embedding
$$\begin{tikzcd} \cechH^{m-1} (\partial Y) \arrow[hookrightarrow,r,"D"] &H_c^m(Y\cup\partial Y,\partial Y)\end{tikzcd}$$
Similarly, for $L$,
$$\begin{tikzcd} \cechH^{m-1} (\partial L) \arrow[hookrightarrow,r,"D"] &H_c^m(L\cup\partial L,\partial L).\end{tikzcd}$$

The map $\Phi\cup \partial \Phi$ is a continuous map of the pairs $ (L \cup \partial L,\partial L) \to (Y \cup \partial Y,\partial Y)$ and by the naturality of the map $D$ in the long exact sequence of the pair, we get the following commutative diagram

\[\begin{tikzcd}
  \cechH ^{m-1}(\partial Y)\arrow[r,hookrightarrow,"D"]\arrow[d,"\simeq","(\partial \Phi)^*"'] &H^m_c(Y)\arrow[d,"\Phi^*"'] \\
 \cechH ^{m-1}(\partial L)   \arrow[hookrightarrow,"D"]{r} &H^m_c(L)
\end{tikzcd}
\]

Fix some $0\ne a\in \cechH ^{m-1}(\partial Y)$, then $D(a)$ is compactly supported, so there exists $\xi>0$ such that $D(a)\in H^n(Y,Y-B_\xi)$. 

Assume for the sake of contradiction that $\Phi(L) \cap B_\xi = \emptyset$. Then, $\Phi^*(D(a))=0$.
On the other hand, $\partial\Phi$ is a homeomorphism, and so $(\partial \Phi)^*$ is an isomorphism. By the commutative diagram we get the following contradiction $$0=\Phi^*(D(a))=D((\partial \Phi)^*(a))\ne 0.$$
\end{proof}

Recall that for a graph $X$ and $d\in\bbN$, the Rips complex $R_d(X)$ is the simplicial complex whose simplices are subsets of the vertex set of $X$ of diameter $\le d$.
If $X$ is $\adelta$-hyperbolic then $R_d(X)$ is contractible for all $d\ge \adelta$.

\begin{lemma}\label{lem: Cech cohomology}
Let $Y$ be as in \cref{prop: uniform quasi-onto} then $\cechH^{m-1}(Y\cup \partial Y)=0$.
\end{lemma}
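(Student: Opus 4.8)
The plan is to show that the inclusion $\iota\colon Y\into \hat Y:=Y\cup\partial Y$ induces an \emph{injection} on reduced Čech cohomology in degree $m-1$, while the target $\cechH^{m-1}(Y)$ already vanishes. For the target: since $Y$ is hyperbolic and carries a proper cocompact action, equip (the realization of) $Y$ with a geodesic piecewise-Euclidean metric; then $Y$ is a proper hyperbolic space and $\hat Y$ is its visual compactification, a compact metrizable space with $\partial Y$ the Gromov boundary. As $Y$ is a simplicial complex, $\cechH^j(Y;\bbZ)=H^j(Y;\bbZ)$, and as $Y$ is $(m-1)$-connected, Hurewicz and universal coefficients give $\tilde H^j(Y;\bbZ)=0$ for all $j\le m-1$; in particular $\cechH^{m-1}(Y)=0$. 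So it remains to establish injectivity of $\iota^{*}$.

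The key geometric input — due in this setting to Bestvina--Mess \cite{bestvina1991boundary} — is that $\partial Y$ is a $Z$-set in $\hat Y$: for every $\varepsilon>0$ there is a continuous map $r_\varepsilon\colon \hat Y\to Y$ moving every point less than $\varepsilon$ in the visual metric. One produces $r_\varepsilon$ by fixing $o\in Y$ and pushing each point toward $o$ ``to depth $N=N(\varepsilon)$'': send $x$ to the point of a geodesic (ray) $\geod{o}{x}$ at distance $\min\{d(o,x),N\}$ from $o$, where $N\to\infty$ as $\varepsilon\to0$; points of $B_N(o)$ are fixed, boundary points land in $Y$, and the visual displacement is $O(e^{-cN})$. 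The one subtle point is continuity at $\partial Y$, where geodesics to $o$ are not unique; this is exactly where hyperbolicity enters, through the uniform fellow-traveling of geodesic rays with a common endpoint at infinity, and it is handled as in \cite{bestvina1991boundary}. Given the maps $r_\varepsilon$, injectivity of $\iota^{*}$ is formal: any class $a\in\tilde\cechH^{m-1}(\hat Y)$ is pulled back from $H^{m-1}(N\mathcal U)$ for some finite open cover $\mathcal U$ of $\hat Y$; choosing $\varepsilon$ below half the Lebesgue number of $\mathcal U$, the self-map $\iota\circ r_\varepsilon$ of $\hat Y$ is $\mathcal U$-close to $\id_{\hat Y}$ (each point and its image lie in a common member of $\mathcal U$), hence induces the identity on the image of $H^{m-1}(N\mathcal U)$ in $\tilde\cechH^{m-1}(\hat Y)$; therefore $a=(\iota\circ r_\varepsilon)^{*}a=r_\varepsilon^{*}(\iota^{*}a)$, and $\iota^{*}a=0$ forces $a=0$. (Equivalently one may quote that a $Z$-set compactification of an ANR is a shape equivalence, so that $\iota^{*}$ is in fact an isomorphism; alternatively, one can first pass to the Rips complex $R_d(Y^{(1)})\supseteq Y$, whose visual compactification is an AR by \cite{bestvina1991boundary}, but this is not needed.)

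Combining the two points, $\tilde\cechH^{m-1}(\hat Y)\hookrightarrow\tilde\cechH^{m-1}(Y)=\tilde H^{m-1}(Y)=0$, where in the borderline case $m-1=0$ one uses that $\hat Y$ is connected; this is the assertion of the lemma. The main obstacle is the $Z$-set property — i.e. constructing the maps $r_\varepsilon$ as genuine continuous maps on all of $\hat Y$, not merely on $Y$ — which is the technical heart and the only place hyperbolicity is used; note in particular that nothing about $Y$ is needed beyond properness, hyperbolicity and $(m-1)$-connectedness (cocompactness enters only via local finiteness, and the precise value of $m$ is irrelevant for this statement).
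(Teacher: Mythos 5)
Your plan diverges substantially from the paper's, and the divergence introduces a genuine gap. The paper never works directly with the compactification $\hat Y=Y\cup\partial Y$; instead it embeds $\hat Y$ into $R^{(m)}\cup\partial R$ (where $R$ is a contractible Rips complex for $Y^{(1)}$) via a proper retraction $r\colon R^{(m)}\to Y$ built inductively from $(m-1)$-connectedness, and then kills $\cechH^{m-1}(R^{(m)}\cup\partial R)$ by the long exact sequence of the pair $(R\cup\partial R,\,R^{(m)}\cup\partial R)$, using Bestvina--Mess's contractibility of $R\cup\partial R$ and the vanishing of $H^m_c(R-R^{(m)})$. The only place the $Z$-set phenomenon enters the paper's argument is inside Bestvina--Mess's theorem, and there it is applied to the Rips complex, where it is actually proved.

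Your argument instead requires that $\partial Y$ be a $Z$-set in $\hat Y$ itself, i.e.\ that there exist continuous maps $r_\varepsilon\colon\hat Y\to Y$ with arbitrarily small displacement, and this is precisely the gap. The geodesic-contraction construction you sketch has a real continuity problem that is \emph{not} ``handled as in Bestvina--Mess.'' Bestvina--Mess define the contraction on vertices and extend it affinely over simplices; this extension is well defined only because in a Rips complex any handful of $\delta$-close vertices spans a simplex, which gives the room to interpolate. In a general $m$-dimensional $(m-1)$-connected $Y$ the contracted images $r_N(v_0),\dots,r_N(v_k)$ of the vertices of a simplex are $O(\adelta)$-close but need not span a simplex, so the affine extension is simply unavailable. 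Beyond that, even a continuous $r_N\colon Y\to B_N(o)$ need not extend continuously to $\partial Y$: as $v\to\xi$ the points $\gamma_v(N)$ only fellow-travel $\gamma_\xi(N)$ to within $\adelta$, they need not converge, so the stated map oscillates near the boundary unless the geodesic choices are made to stabilize, which itself requires an argument. You acknowledge this is ``the technical heart,'' but then dismiss passing to the Rips complex as ``not needed.'' It is: without it (or some substitute) the $Z$-set property of $\partial Y$ in $\hat Y$ is unsubstantiated, and nothing in \cite{bestvina1991boundary} asserts it for a merely $(m-1)$-connected $Y$. Note also that $Z$-set compactifications do not automatically restrict to closed subspaces, so you cannot simply cite the $Z$-set property of $\partial R$ in $\bar R$ and intersect with $\hat Y$. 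The remaining parts of your writeup -- that $\cechH^{m-1}(Y)=0$ and that the Lebesgue-number/contiguity argument upgrades the $Z$-set maps to injectivity of $\iota^*$ on \v{C}ech cohomology -- are fine; the single missing piece is the $Z$-set property itself, which the paper's route through $R^{(m)}\cup\partial R$ deliberately circumvents.
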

\begin{proof}
    Consider a contractible Rips complex $R$ for the 1-skeleton of $Y$, note that $Y\subseteq R$ and that $\partial R = \partial Y$.
    Since $Y$ is $(m-1)$-connected, one can build a retraction $r: R^{(m)} \to Y$ inductively, by mapping each cell $\sigma$ of $R - Y$ of dimension $\le m$ to a filling disk of $r|_{\partial \sigma}$ in $Y$. Using the co-compactness of the $G$-action on $R$, the map $r$ can be chosen to be proper, and so it extends to a retraction $r\cup \partial r: R^{(m)}\cup \partial R \to Y\cup \partial Y$. 
    In cohomology it induces an embedding $$\cechH^{m-1} (Y \cup \partial Y) \hookrightarrow \cechH^{m-1}(R^{(m)} \cup \partial R)$$ and so it suffices to show that $\cechH^{m-1}(R^{(m)} \cup \partial R)=0$.
    
    Consider the long exact sequence for the pair $(R \cup \partial R, R^{(m)} \cup \partial R)$
    \begin{equation*}
        \cechH^{m-1}(R \cup \partial R) \to \cechH^{m-1}(R^{(m)}\cup \partial R) \to \cechH^{m}(R \cup \partial R, R^{(m)} \cup \partial R).
    \end{equation*}
     By \cite{bestvina1991boundary}, $R\cup \partial R$ is contractible so $\cechH^{m-1}(R \cup \partial R)=0$. We also have $$\cechH^{m}(R \cup \partial R, R^{(m)} \cup \partial R) = H_c^m(R-R^{(m)})=0$$ since $R-R^{(m)}$ has only cells of dimension $>m$. It follows from the long exact sequence that $\cechH^{m-1}(R^{(m)} \cup \partial R)=0$ as desired.
\end{proof}

\section{Proof of \cref{thm: volume vs complexity}}\label{sec: main proof}
\label{sec: proof of volume vs complexity}

Let $m\in \bbN$. A group $G$ has type $F_m$ if $G$ acts freely and cocompactly on an $(m-1)$-connected $m$-dimensional CW complex $R$.
For $i\le m$ we recall that $\Cx_{i,m}(G)$ is defined to be the minimal number of $G$-orbits of cells of dimension $\le i$ among all such actions on \textbf{simplicial} $R$. 

Torsion-free hyperbolic groups are of type $F$ because they have a free cocompact action on a contractible simplicial complex, namely, a contractible Rips complex. 
Modifying slightly the construction of Rips complexes, see \cite{bestvina1991boundary}, we see that hyperbolic groups are of type $F_\infty$, i.e of type $F_m$ for all $m$. 
Hence, if $G$ is hyperbolic then $\Cx_{i,m}(G)$ is well-defined and finite for all $i,m$ such that $i\le m$. 



\begin{proof}[Proof of \cref{thm: volume vs complexity}]
Let $X$ be a one-ended hyperbolic graph, and let $G\actson X$ be a free cocompact simplicial action. Let $m = \dim(\partial X)+1$. We wish to show $$ \Vol(X/G) \; \asymp_X \; \Cx_{2,m}(G) .$$

\paragraph{Complexity $\ll_X$ volume.} Following \cite{bestvina1991boundary} we construct an $m$-dimensional $(m-1)$-connected complex with a free $G$-action as follows:
Let $Z_d$ be the subcomplex of the $(m+1)$-fold join $X*\dots *X$ of the graph $X$ which consists of those simplices whose vertices are at distance at most $d$ in $X$. Clearly, $G$ acts freely on $Z_d$, and as shown in \cite{bestvina1991boundary}, for large enough $d$ the complex $Z_d$ is $(m-1)$-connected. Let $Z = Z_d^{(m)}$ be the $m$-skeleton of $Z_d$. It is immediate from the construction that $$C_{2,m}(G) \le \Vol_2(Z/G) \le \Vol(Z/G) \asymp_X \Vol(X/G).$$ 

\paragraph{Volume $\ll_X$ complexity.}
Let $Y$ be a contractible Rips complex of $X$. 
Note that $G$ still acts on $Y$ properly and cocompactly, and that $\Vol(X/G) \asymp \Vol(Y/G)\asymp \Vol(Y^{(1)}/G)$. 
Thus, we may assume that $X=Y^{(1)}$. Let $q$ be a globally stable bicombing on $X$. 
By the definition of $C_{2,m}(G)$, there exists an $m$-dimensional, $(m-1)$-connected simplicial complex $L$ with a free and cocompact $G$-action, with $\Vol_2(L/G)=C_{2,m}(G)$. 
Let $K = L^{(2)}$ be its 2-skeleton, and let $\bar K=K/G$ be its quotient. So, $\Vol(\bar K) = \Vol_2(L/G) = C_{2,m}(G)$.
By \cref{accessibility}, there exists a $G$-equivariant map $\Phi:K^{0}\to X^{0}$ and an associated resolution $(\calF,\bfw)$ of $q$ to $K$ satisfying
\begin{equation}\label{upper bound from accessibility}
    \bfw(\bar \calF) \le \adelta\cdot \Vol(\bar K)=\adelta \cdot C_{2,m}(G).
\end{equation}
where $\bar \calF$ is the quotient pattern on $\bar K$.

On the other hand one can extend the map $\Phi$ to a $G$-equivariant continuous quasi-isometry $\Phi:L\to Y$ by induction on the dimension of simplices as follows: First, for every edge $e$ of $L$ define $\Phi|_e$ to be a geodesic connecting $\Phi(e_-),\Phi(e_+)$ chosen in a $G$-equivariant way. For $k>1$ Assume $\Phi$ was defined on $L^{(k-1)}$. Define $\Phi|_{L^{(k)}}$ $G$-equivariantly so that for every $k$-simplex $a$,  $\Phi|_a$ is a filling disk of $\Phi|_{\partial a}$ in $Y$. Such a disk exists, since $Y$ is contractible. In fact, by \cite[Lemma 1.7.A]{gromov1987hyperbolic}\cite[\S4.2 Proposition 9]{ghys1990groupes}, this filling disk can be chosen in some $\adelta$-neighborhood of $\Phi(K^{(1)})$. So we may assume 
\begin{equation}\label{eq: narrow filling disks}
    \Phi(L) \subseteq \calN_\adelta(\Phi(K^{(1)})).
\end{equation}

By \cref{prop: uniform quasi-onto}, $N_\adelta(\Phi(L)) = Y$. 
Therefore, by \cref{eq: narrow filling disks}, there exists $\delta_1=\delta_1(X)$ such that
\begin{equation}
    X \subseteq Y \subseteq N_{\delta_1}(\Phi(K^{(1)})).
\end{equation}
In other words, for all $x\in X$, in the ball of radius $\delta_1$ around $x$, we can find a point $z\in [\Phi(e_-),\Phi(e_+)]$ of some edge $e\in K$. Moreover, we can choose such $z,e$ so that $\Phi(e_-) \ne \Phi(e_+)$, as otherwise the map $\Phi$ would be constant. By \cref{lower bound on contribution of edge} there are $\rho=\rho(X),\lambda = \lambda (X)$ and an edge $f$ in $\calB_\rho(z)$ whose coefficient in $q(e)$ is at least $1/\lambda$.

Let $\alpha$ be the volume of a ball of radius $2(\rho+\delta_1)$ in $X$.
Then, there are at least $\tfrac{1}{\alpha}\Vol(X/G)$ disjoint $(\rho+\delta_1)$-balls in $X/G$. In each of these balls there is an edge $\bar f$ which corresponds to some connector of the pattern $\bar \calF$ whose weight is at least $1/\lambda$. So, $\tfrac{1}{\alpha\lambda}\Vol(X/G) \le \bfw(\bar \calF)$, or simply
\begin{equation}\label{lower bound from surjectivity}
\Vol(X/G)\le \adelta \cdot \bfw(\bar \calF)
\end{equation}
Combining \cref{upper bound from accessibility,lower bound from surjectivity} we get the desired inequality
$$ \Vol(X/G) \le \adelta \cdot C_{2,m}(G).$$
\end{proof}

\bibliographystyle{plain}
\bibliography{biblio}

\begin{thebibliography}{10}

\bibitem{bader2020homology}
Uri Bader, Tsachik Gelander, and Roman Sauer.
\newblock Homology and homotopy complexity in negative curvature.
\newblock {\em Journal of the European Mathematical Society}, 22(8):2537--2571,
  2020.

\bibitem{ballmann1985manifolds}
Werner Ballmann.
\newblock Manifolds of non positive curvature.
\newblock In {\em Arbeitstagung Bonn 1984}, pages 261--268. Springer, 1985.

\bibitem{beeker2016resolutions}
Benjamin Beeker and Nir Lazarovich.
\newblock Resolutions of cat (0) cube complexes and accessibility properties.
\newblock {\em Algebraic \& Geometric Topology}, 16(4):2045--2065, 2016.

\bibitem{beeker2017cubical}
Benjamin Beeker and Nir Lazarovich.
\newblock Cubical accessibility and bounds on curves on surfaces.
\newblock {\em Journal of Topology}, 10(4):1050--1065, 2017.

\bibitem{belolipetsky2010counting}
Mikhail Belolipetsky, Tsachik Gelander, Alexander Lubotzky, and Aner Shalev.
\newblock Counting arithmetic lattices and surfaces.
\newblock {\em Annals of mathematics}, pages 2197--2221, 2010.

\bibitem{bestvina1991boundary}
Mladen Bestvina and Geoffrey Mess.
\newblock The boundary of negatively curved groups.
\newblock {\em Journal of the American Mathematical Society}, 4(3):469--481,
  1991.

\bibitem{bridson2010cofinitely}
Martin~R Bridson, Daniel Groves, Jonathan~A Hillman, and Gaven~J Martin.
\newblock Cofinitely hopfian groups, open mappings and knot complements.
\newblock {\em Groups, Geometry, and Dynamics}, 4(4):693--707, 2010.

\bibitem{brinkmann2000hyperbolic}
Peter Brinkmann.
\newblock Hyperbolic automorphisms of free groups.
\newblock {\em Geometric and Functional Analysis}, 10(5):1071--1089, 2000.

\bibitem{cooper1999volume}
Daryl Cooper.
\newblock The volume of a closed hyperbolic 3-manifold is bounded by times the
  length of any presentation of its fundamental group.
\newblock {\em Proceedings of the American Mathematical Society}, 127(3):941,
  1999.

\bibitem{delzant1995image}
Thomas Delzant.
\newblock L'image d'un groupe dans un groupe hyperbolique.
\newblock {\em Commentarii Mathematici Helvetici}, 70(1):267--284, 1995.

\bibitem{delzant1996decomposition}
Thomas Delzant.
\newblock D{\'e}composition d'un groupe en produit libre ou somme
  amalgam{\'e}e.
\newblock {\em Journal f{\"u}r die reine und angewandte Mathematik},
  470:153--180, 1996.

\bibitem{delzant2013complexity}
Thomas Delzant and Leonid Potyagailo.
\newblock On the complexity and volume of hyperbolic 3-manifolds.
\newblock {\em Israel Journal of Mathematics}, 193(1):209--232, 2013.

\bibitem{dunwoody1985accessibility}
Martin~J Dunwoody.
\newblock The accessibility of finitely presented groups.
\newblock {\em Inventiones mathematicae}, 81(3):449--457, 1985.

\bibitem{gelander2011volume}
Tsachik Gelander.
\newblock Volume versus rank of lattices.
\newblock {\em J. Reine Angew. Math.}, 661:237–248, 2011.

\bibitem{gelander2004homotopy}
Tsachik Gelander et~al.
\newblock Homotopy type and volume of locally symmetric manifolds.
\newblock {\em Duke Mathematical Journal}, 124(3):459--515, 2004.

\bibitem{gelander2019minimal}
Tsachik Gelander and Raz Slutsky.
\newblock On the minimal size of a generating set of lattices in lie groups.
\newblock {\em arXiv e-prints}, pages arXiv--1903, 2019.

\bibitem{gelander2021bounds}
Tsachik Gelander and Paul Vollrath.
\newblock Bounds on systoles and homotopy complexity.
\newblock {\em arXiv preprint arXiv:2106.10677}, 2021.

\bibitem{ghys1990groupes}
{\'E}tienne Ghys.
\newblock Sur les groupes hyperboliques d'apres mikhael gromov.
\newblock {\em Progr. Math.}, 83, 1990.

\bibitem{gromov1982volume}
Michael Gromov.
\newblock Volume and bounded cohomology.
\newblock {\em Publications Math{\'e}matiques de l'IH{\'E}S}, 56:5--99, 1982.

\bibitem{gromov1987hyperbolic}
Mikhael Gromov.
\newblock Hyperbolic groups.
\newblock In {\em Essays in group theory}, pages 75--263. Springer, 1987.

\bibitem{lazarovich2021volume}
Nir Lazarovich.
\newblock Volume vs. complexity of hyperbolic groups.
\newblock {\em arXiv preprint arXiv:2107.13250}, 2021.

\bibitem{mineyev2001straightening}
Igor Mineyev.
\newblock Straightening and bounded cohomology of hyperbolic groups.
\newblock {\em Geometric \& Functional Analysis GAFA}, 11(4):807--839, 2001.

\bibitem{mostow1968quasi}
George~D Mostow.
\newblock Quasi-conformal mappings in $ n $-space and the rigidity of
  hyperbolic space forms.
\newblock {\em Publications Math{\'e}matiques de l'IH{\'E}S}, 34:53--104, 1968.

\bibitem{reznikov1996volumes}
Alexander Reznikov.
\newblock Volumes of discrete groups and topological complexity of homology
  spheres.
\newblock {\em Mathematische Annalen}, 306(3):547--554, 1996.

\bibitem{rips1995canonical}
Eliyahu Rips and Zlil Sela.
\newblock Canonical representatives and equations in hyperbolic groups.
\newblock {\em Inventiones mathematicae}, 120(1):489--512, 1995.

\bibitem{sela1997structure}
Zlil Sela.
\newblock Structure and rigidity in (gromov) hyperbolic groups and discrete
  groups in rank 1 lie groups ii.
\newblock {\em Geometric \& Functional Analysis GAFA}, 7(3):561--593, 1997.

\bibitem{stark2018hyperbolic}
Emily Stark and Daniel~J Woodhouse.
\newblock Hyperbolic groups that are not commensurably co-hopfian.
\newblock {\em International Mathematics Research Notices}, 2018.

\bibitem{sykiotis2018complexity}
Mihalis Sykiotis.
\newblock Complexity volumes of splittable groups.
\newblock {\em Journal of Algebra}, 503:409--432, 2018.

\bibitem{thurston1979geometry}
William~P Thurston.
\newblock {\em The geometry and topology of three-manifolds}.
\newblock Princeton University Princeton, NJ, 1979.

\bibitem{walsh1981dimension}
John~J Walsh.
\newblock Dimension, cohomological dimension, and cell-like mappings.
\newblock In {\em Shape theory and geometric topology}, pages 105--118.
  Springer, 1981.

\bibitem{wang1994covering}
Shicheng Wang and Ying-Qing Wu.
\newblock Covering invariants and cohopficity of 3-manifold groups.
\newblock {\em Proceedings of the London Mathematical Society}, 3(1):203--224,
  1994.

\bibitem{wang1999covering}
Shicheng Wang and F~Yu.
\newblock Covering degrees are determined by graph manifolds involved.
\newblock {\em Commentarii Mathematici Helvetici}, 74(2):238--247, 1999.

\end{thebibliography}
\end{document}